\definecolor{darkgreen}{rgb}{0.1,0.4,0.1}
\newenvironment{proofof}[1]{{{\noindent \em {Proof of}} #1.}}{\hfill\rule{2mm}{2mm}}
\DeclareMathOperator*{\argmax}{arg\,max}
\renewcommand{\labelenumi}{(\roman{enumi})}
\newcommand{\be}{\begin{equation}}
\newcommand{\ee}{\end{equation}}
\newcommand{\bew}{\begin{equation*}}
\newcommand{\eew}{\end{equation*}}
\newtheorem{thm}{Theorem}[section]
\newtheorem{theorem}{Theorem}[section]
\newtheorem{definition}[thm]{Definition}
\newtheorem{proposition}[thm]{Proposition}
\newtheorem{lemma}[thm]{Lemma}
\newtheorem{remark}[thm]{Remark}
\newtheorem{assumption}[thm]{Assumption}
\numberwithin{equation}{section}
\begin{document}
\allowdisplaybreaks
\renewcommand{\labelenumi}{(\arabic{enumi})}

\title{Exact Simulation of Non-stationary \\ Reflected Brownian Motion}

\author{Mohammad Mousavi\footnote{Corresponding author. Department of Management Science \& Engineering,
Stanford University, email: {\tt mousavi@stanford.edu},
web: {\tt www.stanford.edu/$\sim$mousavi}.},\; Peter W. Glynn\footnote{Department of Management Science \& Engineering,
Stanford University, email: {\tt glynn@stanford.edu},
web: {\tt www.stanford.edu/$\sim$glynn}.}} \normalsize

\date{\today} 
\maketitle

\begin{abstract}
This paper develops the first method for the exact simulation of reflected Brownian motion (RBM) with non-stationary drift and infinitesimal variance. The running time of generating exact samples of non-stationary RBM at any time $t$ is uniformly bounded by $\mathcal{O}(1/\bar\gamma^2)$ where $\bar\gamma$ is the average drift of the process. The method can be used as a guide for planning simulations of complex queueing systems with non-stationary arrival rates and/or service time.
\end{abstract}

\newpage

\newcommand{\Q}{\mathbb{Q}}
\renewcommand{\P}{\mathbb{P}}
\newcommand{\norm}[1]{\|#1\|}
\newcommand{\nice }{\emph{nice }}
\newcommand{\nf }{as $n\rightarrow \infty$}
\newcommand{\ds }{ \buildrel D\over= }
\newcommand{\eqdist }{\ds}
\newcommand{\df }{\triangleq}
\newcommand{\EE}{\mathbb{E}}
\def\var{\mathop{\rm var}\nolimits} 

\newcommand{\ALGExact}{2}
\newcommand{\ALGAR}{1}

\setcounter{section}{0}
\setcounter{section}{0}

\section{Introduction}

This paper is concerned with the exact simulation of reflected Brownian
motion (RBM) with non-stationary drift and infinitesimal variance. Our
interest in this model stems from the fact that RBM is commonly used as a
stylized representation of a single-station queue (and often as a model
for extracting numerical approximations to queues in heavy traffic);
see \citet{heavy_traffic}.

In many (indeed most) real-world applications of queueing models, there exist
non-stationarities in the arrival rates and/or service time requirements
that are induced by time-of-day, day-of-week, or seasonality effects. In
addition, in some situations (as in production or inventory contexts),
there may also be non-stationarities associated with rising or falling
demand for a product, as it is introduced to the marketplace or becomes
obsolete. In such applications, a simplified description of the workload
process $X=(X(t):t\geq 0)$ is to postulate that it satisfies the stochastic
differential equation (SDE)
\begin{align}\label{RBM}
dX(t)=\mu(t) dt+\sigma(t) dB(t) + dL(t),
\end{align}
where 
$L=(L(t):t\geq 0)$ is the continuous non-decreasing process
satisfying $$I(X(t)>0) dL(t)=0$$ for $t\geq 0$, $B= (B(t)\colon t\geq
0)$ is standard Brownian motion, and  $\mu=(\mu(t)\colon t\geq 0)$ and
$\sigma=(\sigma(t)\colon t\geq 0)$ are given (measurable) deterministic
functions. Note that this stylized model permits both the instantaneous
drift and volatility to be separately specified, unlike the non-stationary
$M(t)/M(t)/1$ model that has been previously studied in the queueing
literature (see, for example, \citet{Massey}) in which the instantaneous drift
must always match the instantaneous volatility. Assuming $X(0)=x$, our goal
here is to provide an algorithm for generating $X(t)$  with a complexity that
is bounded in $t$, at least when $X$ empties infinitely often almost 
surely
(a.s.). If the coefficient functions are stationary (so that $\mu(\cdot)$
and $\sigma(\cdot)$ are constant) and we send $t\rightarrow \infty$, it is
evident that this is a non-stationary analog to the \textit{exact simulation}
problem for positive recurrent Markov processes. Hence, we use the terminology
``exact simulation'' to also refer to our non-stationary problem.

Of course, if RBM has stationary drift and infinitesimal variance, the
transient and steady-state distributions of $X$ are then known in closed
form, and simulation is unnecessary.
In the non-stationary context, the transition density $p(t,x,y) dy\
(\stackrel{\Delta}{=}P(X(t)\in dy | X(0)=x))$ would be expected to satisfy
the Kolmogorov forward partial differential equation (PDE)
\begin{align}
\frac{\partial}{\partial s} p(s,x,y)=\frac{1}{2} \sigma^2(s)
\frac{\partial^2}{\partial y^2} p(s,x,y)- \mu(s) \frac{\partial}{\partial y}
p(s,x,y), \mbox{\  \  \  \  \  \ $0< s\leq t,$}
\end{align}
subject to $P(X(0)\in dy | X(0)=x)=\delta_x(dy)$ and
\[
 \frac{\sigma(s)^2}{2} \frac{\partial}{\partial y} p(s,x,0)- \mu(s)
 p(s,x,0)=0,  \mbox{\  \  \  \  \  \ $0< s\leq t.$}
\]
 Unlike the stationary case, this PDE has no known closed-form solution,
 and would need to be solved numerically. This paper provides an efficient
 computational alternative to
 numerically solving the above PDE, which is especially attractive when
 the time horizon $t$ of interest is large.

As indicated above, $X$ can be used as a basis model for studying a queue
with non-stationary dynamics. But in many applications, we would prefer
to use a ``finer grain" and more realistic simulation model, rather than
RMB itself, as a mathematical description of the real-world system under
consideration. One intuitively expects that such models ``lose memory",
in the sense that the distribution at time $t$ often will be insensitive
to the state at time $t-u$, provided that $u$ is chosen large enough. In
the presence of such insensitivity, one can (for example) initialize
the system in the empty state at time $t-u$ and execute the fine-grain
simulation only over $[t-u,t]$ rather than $[0,t]$, thereby generating
significant computational savings. Thus, identifying an appropriate value
of $u$ is of significant interest.

While estimating the ``loss of memory" for the underlying detailed model would
be extremely challenging, we will argue in Section 2, via a coupling argument,
that it can be readily estimated for the simplified RBM model. Simulation of
the non-stationary RBM can then be used to determine how large $u$ should
be chosen for the detailed model (perhaps by multiplying the RBM's value
of $u$ by a factor of 2, in order to account for the model approximation
error). Thus, non-stationary RBM can be viewed as a simulation planning
tool for more complex detailed queueing simulations, in the same sense that
\citet{whitt1989planning} and \citet{asmussen1992queueing} argue that RBM with
stationary dynamics is an appropriate tool for planning steady-state
queueing simulations.

 We note, in passing, that a special case of our problem arises
 when $\mu(\cdot)$ and $\sigma(\cdot)$ are periodic (with the same
 period). Various theoretical results are known for such periodic models;
 see for example, \citet{Lemoine}, \citet{ PWhitt}, \citet{Thorisson},
 and \citet{Bambos}. In addition, when  $\mu(\cdot)$ and $\sigma(\cdot)$
 are stochastic (but independent of $B=(B(t):t\geq 0))$, the problem of
 simulating $X$ reduces to the deterministic case considered here upon
 conditioning on $\mu$ and $\sigma$. In this way, our method can cover
 (for example) RBM with non-Markov $\mu$ and $\sigma$.

The remainder of this paper is organized as follows: Section 2 discusses
how to plan the simulation of a queueing model with non-stationary
inputs. Sections 3 and 4 
 develop the main exact simulation methods of
time-dependent RBM. 
Sections 5 and 6 explain the implementation details. Section
6 analyzes the algorithm. Section 7 provides numerical results.

\section[Planning Simulations of Non-stationary Queueing Models]{Planning Simulations of Non-stationary\\Queueing Models}

Our 
goal here is to study the rate at which the non-stationary RBM 
$X$
``loses memory''. In view of our discussion in the 
introduction, we wish
to specifically answer the following question:\\
\begin{center}
\parbox{12cm}{
\textit{How large must we choose $u$ so that the RBM started at
time $t-u$ from the idle state (i.e. no workload in the system) will have
a distribution at time $t$ that is close to that of the RBM at time $0$
from its initial workload $x$?\\}
}\end{center}
For $0\leq s \leq t$, let $(X_s(t): t\geq s)$ be the solution to (\ref{RBM})
conditional on $X_s(s) = 0$, and let $\| \cdot \|$ be the total variation
norm. Also, set
$$\hat {v}_t = \sup{\{r \in [0,t]\colon X(r)=0}\}$$
with the convention that if $X$ does not visit $0$ over $[0,t]$, then
$\tilde{v}_t=-\infty$.

\begin{proposition} \label{coupling}
For $0 \leq s \leq t$,
\begin{align*}
\| \P(X_s(t) \in \cdot)-\P(X(t) \in \cdot \mid X(0)=x) \|  \leq \P(\tilde{v}_t
\leq s \mid X(0)=x).
\end{align*}
\end{proposition}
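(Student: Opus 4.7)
The natural route is a coupling argument bounding the total variation distance by the probability that the two processes disagree at time $t$. Concretely, the plan is to construct $X$ and $X_s$ on a common probability space, driven by the same Brownian motion $B$ and sharing the same coefficients $\mu(\cdot), \sigma(\cdot)$, and then to show that they coalesce no later than the first time $X$ hits $0$ after time $s$. The standard inequality
\[
\| \P(X_s(t) \in \cdot) - \P(X(t) \in \cdot \mid X(0) = x) \| \leq \P(X_s(t) \neq X(t))
\]
then reduces matters to estimating the probability of non-coalescence.

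The first step is to write both processes through the Skorohod map on $[s, \infty)$. Setting $W(r) = \int_s^r \mu(u)\, du + \int_s^r \sigma(u)\, dB(u)$, one has, for $r \geq s$,
\[
X(r) = X(s) + W(r) + L(r), \qquad X_s(r) = W(r) + L_s(r),
\]
with the local-time terms given explicitly by $L(r) = (m(r) - X(s))^+$ and $L_s(r) = m(r)^+$, where $m(r) = -\inf_{s \leq u \leq r} W(u)$. A direct case analysis on whether $m(r) \leq X(s)$ or $m(r) > X(s)$ then yields
\[
X(r) - X_s(r) = (X(s) - m(r))^+ \geq 0.
\]
Hence $X$ and $X_s$ agree precisely from the random time
\[
\tau \;=\; \inf\{r \geq s : m(r) \geq X(s)\}
\]
onward. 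A short verification using the same Skorohod representation shows that $\tau$ is also the first time after $s$ that $X$ itself hits $0$; indeed, $X(\tau) = X(s) + W(\tau) + (m(\tau) - X(s))^+ = W(\tau) + m(\tau) = 0$ at the first $r$ where $W$ attains the level $-X(s)$.

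Combining these facts, on the event $\{\tau \leq t\}$ we have $X(t) = X_s(t)$ almost surely, so $\{X_s(t) \neq X(t)\} \subseteq \{\tau > t\}$. But $\{\tau > t\}$ is precisely the event that $X$ has no zero in $(s, t]$, which is $\{\tilde v_t \leq s\}$ by definition of $\tilde v_t$. Chaining the coupling inequality with this inclusion gives the claim.

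The only delicate point is the Skorohod identity $X(r) - X_s(r) = (X(s) - m(r))^+$ together with the identification $\tau = \inf\{r \geq s : X(r) = 0\}$; once these are established, the coupling bound and the resulting total-variation inequality are routine. I expect the main obstacle to be writing the Skorohod computation cleanly under time-dependent coefficients, but since $\mu, \sigma$ enter only through the single driving path $W$, the standard one-dimensional reflection identities carry over unchanged.
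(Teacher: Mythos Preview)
Your proposal is correct and follows essentially the same coupling argument as the paper: both constructions drive $X$ and $X_s$ with the same Brownian motion, invoke the explicit Skorohod (reflection) representation, and observe that the two processes coalesce at the first time $X$ hits $0$ after time $s$, so that $\{X_s(t)\neq X(t)\}\subseteq\{\tilde v_t\le s\}$. The only cosmetic difference is that the paper writes the Skorohod map from time $0$ (in terms of $\widetilde Y$ and $\widetilde M(0,t)$, $\widetilde M(s,t)$), whereas you restart it at time $s$ and derive the neat identity $X(r)-X_s(r)=(X(s)-m(r))^+$; the content is the same.
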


Proposition \ref{coupling} provides an answer to our question: For a given
error tolerance $\epsilon$, we should choose $u$ so that
\begin{align*}
\P(\tilde{v}_t \leq t-u \mid X(0)=x) = \P(t-\tilde{v}_t \geq u \mid X(0)=x)
\leq \epsilon .
\end{align*}
\\
\begin{proofof} {\textit{Proposition \ref{coupling}}}
Let
\begin{align*}
\widetilde{Y}(t)&= \int_0^t \! \mu(s) \, ds +  \int_0^t \! \sigma(s) \,
dB(s) \\
\widetilde{M}(s,t) &= -\displaystyle\inf_{s \leq r \leq t} \widetilde{Y}(r)
\end{align*}
for 
$0\leq s \leq t$. It is well known that the 
solution to the SDE
(\ref{RBM}), conditional on $X(0)=x$, can be explicitly compared in terms
of $\widetilde{Y}$; 
specifically,
\begin{align}
X(t)=x+ \widetilde{Y}(t)+ \displaystyle\sup_{0 \leq s \leq
t}{\max\left(-x-\widetilde{Y}(s), 0\right)};\label{x_exp}
\end{align}
see, for example, p. 20 of \citet{harrison}.

Relation (\ref{x_exp}) can be re-expressed as
\begin{align} \label{eq.e2}
X(t) = \begin{cases} \widetilde{Y}(t)+\widetilde{M}(0,t) &\mbox{if }
\widetilde{M}(0,t) \geq x, \\
\widetilde{Y}(t)+x &\mbox{if } \tilde{M}(0,t) \leq x. \end{cases}
\end{align}
We can couple $X_s$ to $X$ by noting  that $X_s(t)$ can similarly be
expressed as
\begin{align*}
X_s(t) = \widetilde{Y}(t)+\widetilde{M}(s,t).
\end{align*}
Hence, $X_s(t)=X(t)$ whenever $\widetilde{M}(s,t) = \widetilde{M}(0,t) \geq
x$. But $\widetilde{M}(s,t)=\widetilde{M}(0,t) \geq x$ precisely whenever 
$X$
visits $0$ on $[s,t]$, which is equivalent to assertions that $\widetilde{v}_t
\geq s$. Consequently, 
coupling between $X$ and $X_s$ establishes that
\begin{align*}
\P(X_s(t) \in \cdot\ ,\ \tilde{v}_t \geq s) = \P( X(t) \in \cdot\ ,
\tilde{v}_t \geq s \mid X(0) = x),
\end{align*}
proving that
\begin{align*}
\displaystyle\sup_{B} \Big \vert \P(X_s(t) \in B) - \P(X(t) \in B \mid
X(0)=x )\Big \vert \leq \P(\tilde{v}_t < s \mid X(0)=x).
\end{align*}

\end{proofof}

Thus, the key random variable (rv) to simulate for purposes of planning
queueing simulations is $t-\tilde{v}_t$. On the other hand, when $X$ itself
is the best  model of interests, our focus is on $X(t)$. So, our goal in
this paper is to efficiently simulate the pair $(t-\tilde{v}_t, X(t))$. In
particular, our interest is in an algorithm for generating $(t-\tilde{v}_t,
X(t))$ having a computational complexity independent of 
$t$.

To pursue this objective, it is convenient to use a distributionally
equivalent representation for $(t-\tilde{v}_t, X(t))$. To this end, let
\begin{align*}
Y^\star(r) = \widetilde{Y}(t) - \widetilde{Y}(t-r)
\end{align*}
be the time-reversal of $\widetilde{Y}$ (time-reversed from time $t$). We now
express $t-\tilde{v}_t$ and $X(t)$ in terms of the time-reversal $Y^\star$.

Note that
\begin{align} \label{eq.e0}
X(t) = \begin{cases} \displaystyle\sup_{0\leq s\leq t} Y^\star(s),  &\mbox{if
} \displaystyle\sup_{0\leq s\leq t} Y^\star(s) \geq x+Y^\star(t) \\
\\
x+Y^\star(t), & \mbox{if} \displaystyle\sup_{0\leq s\leq t} Y^\star(s) <
x+Y^\star(t) . \end{cases}
\end{align}

As for $\tilde{v}_t$, it is (in 
the event $\{\widetilde{M}(0,t)\geq x\}$)
the largest value of $r\in [0,t]$  for which
\begin{align*}
\widetilde{Y}(r) = \inf_{0\leq s\leq r} \widetilde{Y}(s).
\end{align*}
Since $\tilde{v}_t$ is the largest such value and $\widetilde{Y}$ is
continuous,
\begin{align*}
\widetilde{Y}(v) > \inf_{0\leq s\leq t} \widetilde{Y}(s).
\end{align*}
for $v\in [\tilde{v}_t,t]$. Consequently,
\begin{align*}
\widetilde{Y}(\tilde{v}_t) = \inf_{0\leq s\leq t}\widetilde{Y}(s).
\end{align*}
Because $\widetilde{Y}(\cdot)$ has a unique minimizer on $[0,t]$,
$\tilde{v}_t=\arg\min (\widetilde{Y}(s)\colon 0\leq s\leq t)$. It is then
immediate that $v^\star_t=t-\tilde{v}_t$, where
\begin{align*}
v^\star_t = \arg\max \Big(Y^{\star}(s): 0\leq s\leq t\Big).
\end{align*}
We conclude that
\begin{align} \label{eq.e1}
t-\tilde{v}_t = \begin{cases} v^\star_t &\mbox{if }\displaystyle \sup_{0\leq
s\leq t} Y^\star(s) \geq x+ Y^\star (t) \\
\infty & \mbox{else } . \end{cases}
\end{align}
Thus, the random vector $(t-\tilde{v}_t, X(t))$ is determined by
$\Big(v^\star_t,\displaystyle\sup_{0\leq s\leq t} Y^\star (s),
Y^\star(t)\Big)$.

Finally, note that $(Y^\star (s)\colon s\geq 0)$ has the same law as
the process
\begin{align*}
Y (s) = \int_{0}^s \mu' (r) dr + \int_{0}^s \sigma' (r) d B(r),
\end{align*}
where $\mu'(r)\df \mu(t-r)$ and $\sigma'(r)=\sigma(t-r)$ for $r\geq0$. Thus,
the remainder of this section is concerned with generating the triplet
$(v_t,M(t),Y(t))$ ($ \ds(v^\star_t, \displaystyle\sup_{0\leq s\leq t}
Y^\star (s), Y^\star (t))$), 
where
\begin{align}
&M(t)=\displaystyle\max_{0\leq s\leq t} Y(s)\label{max_m} \\
&v_t = \arg\max (Y(s): 0\leq s \leq t),\nonumber
\end{align}
and $ \buildrel D\over=$ denotes equality in distribution.

Suppose now that the drift and volatility functions are periodic. Without loss
of generality, we assume that the period is one. In this periodic setting,
$\mu'(\cdot)=\mu(n+b-\cdot)$ and $\sigma'(\cdot)=\sigma(n+b-\cdot)$
are independent of $n\in \mathbb{Z}_+$ for $0\leq b\leq 1$, so that
our above argument establishes that $(n+b-\tilde{v}_{n+b},X(n+b))$ can
be determined from the joint distribution of $\left(v_{n+b}, M(n+ b),
Y(n+b)\right)$. Furthermore, if
\begin{align}
\int_{0}^1 \mu (s) \operatorname{d} s < 0,\label{pcondition}
\end{align}
then $Y(t)\rightarrow -\infty $ a.s. as $t\rightarrow \infty$, 
and there
exist finite-valued rev's $v^b_\infty$ and $M^b_\infty$ such that
\begin{align*}
(v_t, M(t)) \rightarrow (v^b_\infty, M^b_\infty)
\end{align*}
as $t\rightarrow \infty$. In addition,
\begin{align*}
X(n+b) \Rightarrow M_\infty^b
\end{align*}
as $n\rightarrow \infty$. Hence, if $(v_t, M(t), Y(t))$ can be generated
with a complexity independent of $t$, one arrive at an algorithm that can
generate $(n+b-\tilde{v}_{n+b}, X(n+b))$ with a complexity independent of
$n$. Furthermore, if $M_\infty^b$ can be sampled, this converts to an exact
sampling algorithm for the equilibrium of $(X(n+b): n\geq 0)$.

This leads 
naturally to the following re-formulation of the key issue to be
addressed in this 
chapter:\\
\begin{center}
\parbox{12cm}{
\textit{Under appropriate hypotheses, provide an algorithm for generating
$M(\infty)$ in finite expected time, and generating $(v_t, M(t), Y(t))$
with a complexity independent of $t$.\\
}}
\end{center}

\noindent For a non-periodic specification of $\mu'(\cdot)$ and
$\sigma'(\cdot)$ for which $Y(t)\rightarrow -\infty$ as $t\rightarrow
\infty$ a.s., we can view a solution to the above problem as providing an
exact sampling algorithm for the rv $X(0)$, assuming $X$ was initialized at
time $-\infty$ and evolves 
according to the drift $\mu'(-r)$ and volatility
$\sigma'(-r)$ for $r\in (-\infty, 0]$. We note that a sufficient condition
for $Y(t)\rightarrow \infty$ a.s. is that $\int_0^t\mu'(s)ds \rightarrow
-\infty$,  $\int_0^t\sigma'^2(s)ds \rightarrow \infty$, and
\begin{align*}
			\frac{\sqrt{\int_{0}^{t} \! \sigma^{\prime 2}(s)ds
			\cdot \log\log (\int_{0}^{t} \! \sigma^{\prime 2}(s)
			\, d s)}}{\int_{0}^{t} \! \mu^\prime(s) \, d s}
			\rightarrow 0
		\end{align*}
		as $t \rightarrow \infty$. This follows from the law of
		the iterated logarithm for $B$ 
and the fact that
		\begin{align}\label{TC}
			\left( \int_{0}^{t} \sigma^\prime d B(s)\colon
			t\geq 0\right) \ds\left(B\left( \int_{0}^{t}
			\! \sigma^{\prime 2}(s) \, d s\right)\colon t\geq
			0\right).
		\end{align}
To simplify the notation over the reminder of this paper, we henceforth
denote $\mu'$ and $\sigma'$ by $\mu$ and $\sigma$, respectively.

\section{Our First Proposed Algorithm}
	As argued in Section 2, our interest in this paper is to efficiently
	simulate the triplet $(v_t,M(t),Y(t))$, where $Y$ is given by
	\begin{align}
		Y(t) &= \int_{0}^{t} \! \mu (s) \, d s + \int_{0}^{t}
		\! \sigma (s) \, d B(s).
	\end{align}
	We assume that:
	\begin{assumption}\label{A1}
	 The function $\mu (s)$ and $\sigma^2(s)$ are differentiable and
	 $\sigma^2(t)>0$ for $t\geq 0$.
	\end{assumption}
	Set
	\begin{align*}
		\Lambda (t) &= \int_{0}^{t} \! \sigma^2(s) \, d s
	\end{align*}
	and let $\Lambda^{-1}(\cdot)$ be its function inverse. In view
	of (\ref{TC}), it is immediate that $Y$ has the same law as
	$(Z(\Lambda(t))\colon t\geq 0)$, 
where
	\begin{align}
		Z(t) &= \int_{0}^{t} \! \gamma(s) \, d s+B(t)
	\end{align}
	and
	\begin{align*}
		\gamma(t) \df
		\frac{\mu(\Lambda^{-1}(t))}{\sigma^2(\Lambda^{-1}(t))}.
	\end{align*}
	Set
        $$
\displaylines{
		m(t) \df \max_{0\leq s \leq t}Z(s)  \cr
		\eta_t \df \argmax(Z(s):0\leq s \leq t).}
                $$
	Because
	\begin{align}
		(v_t, M(t),Y(t)) &\ds
		(\Lambda^{-1}(\eta_{\Lambda(t)}),m(\Lambda(t)),Z(\Lambda(t)))\label{time_T}
	\end{align}
	for $t \geq 0$ 
and
	\begin{align*}
		(v_\infty , M(\infty)) \ds (\Lambda ^{-1}(\eta_\infty),
		m(\infty)),
	\end{align*}
	our problem therefore reduces to the study of $(\eta_t,m(t),Z(t))$
	and $(\eta_\infty,m(\infty))$. We now further 
assume:
		\begin{assumption}\label{A2}
		There exist 
$d>0$ and $\bar\gamma > 0$ such that
		 \begin{align*} \int_{s}^{t} \! \gamma(u) \, d u \leq d
		 -(t-s)\cdot  \bar \gamma\end{align*} for $0 \leq s \leq t$.
	\end{assumption}
	Assumption \ref{A2} is clearly simplified in the periodic setting
	under (\ref{pcondition}).

	We start by noting that $\eta_t$ can be bounded under 
Assumption
	\ref{A2}. Observe that
	\begin{align}
		\eta_t &\leq \eta_\infty \nonumber\\
		&= \argmax(Z(s): s \geq 0) \nonumber\\
		&\leq \sup \{ s\geq 0: Z(s) \geq Z(0)\}\nonumber\\
		&= \sup\{ s \geq 0 : \int_{0}^{s} \! \gamma(u) \, d u +
		B(s) \geq 0 \} \label{3.3} \\
		&\leq \sup\{s\geq 0: B(s) \ge \bar \gamma s - d\} \nonumber \\
		&\df L. \nonumber
	\end{align}
	But
	\begin{align}
		L &= 1/\inf\{r\geq 0 : B(1/r) \geq \bar\gamma/r - d\}
		\nonumber \\
		&= 1/\inf \{r \geq 0 : rB(1/r) \geq \bar\gamma -dr \}
		\nonumber \\
		&\ds 1/\inf\{r \geq 0: B(r)+dr \geq \bar\gamma \}
		\label{3.4} \\
		&\df 1/H, \nonumber
	\end{align}
	since $(rB(1/r)\colon r\geq 0) \ds (B(r):r\geq 0)$; see, for example
	on \citet[p .104]{Shreve}. 
We conclude that
	\begin{align*}
		\P(\eta_t \geq u) &\leq \P(1/H \geq u) \\
		&= \P(H \leq 1/u).
	\end{align*}
	In view of $H$'s 
inverse Gaussian distribution (see p. 297 of
	\citet {Shreve}), 
we arrive at the following analytical bound on
	$\eta_t$'s probability.
	\begin{proposition}
	Under Assumptions \ref{A1} and \ref{A2},
	\begin{align*}
		\P(\eta_t \geq u) \leq \int_{0}^{1/u} \! \left(
		\frac{\bar\gamma^2}{2\pi x^3}\right)^{1/2}\exp \left(-
		\frac{(dx-\bar\gamma )^2}{2x}\right) \, d x.
	\end{align*}
	\end{proposition}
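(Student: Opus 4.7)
The proof is essentially a matter of identifying the law of $H$ and integrating, since the main reduction has already been carried out in the paragraphs leading up to the statement. My plan is as follows.

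The derivation in \eqref{3.3}--\eqref{3.4} established the chain
$$\P(\eta_t \geq u) \;\leq\; \P(L \geq u) \;=\; \P(1/H \geq u) \;=\; \P(H \leq 1/u),$$
where $H = \inf\{r \geq 0 : B(r) + dr \geq \bar\gamma\}$ is the first-passage time of a Brownian motion with drift $d$ to level $\bar\gamma$. So the only remaining task is to evaluate $\P(H \leq 1/u)$ explicitly.

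For this I would invoke the standard fact (cited to p.\ 297 of \citet{Shreve}) that $H$ has an inverse Gaussian distribution with density
$$f_H(x) \;=\; \frac{\bar\gamma}{\sqrt{2\pi x^3}}\,\exp\!\left(-\frac{(\bar\gamma - dx)^2}{2x}\right), \qquad x>0.$$
If a self-contained derivation were required, the cleanest route is Girsanov's theorem: the measure change that removes the drift $d$ turns $H$ into the first-passage time of a driftless Brownian motion to level $\bar\gamma$, whose density $\bar\gamma/\sqrt{2\pi x^3}\,\exp(-\bar\gamma^2/(2x))$ is supplied by the reflection principle; the Radon--Nikodym factor $\exp(d\bar\gamma - d^2 H/2)$ then restores the exponent to exactly $(\bar\gamma - dx)^2/(2x)$.

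Integrating $f_H$ over $[0,1/u]$ and making the two cosmetic rewrites $\bar\gamma/\sqrt{2\pi x^3} = (\bar\gamma^2/(2\pi x^3))^{1/2}$ and $(\bar\gamma - dx)^2 = (dx - \bar\gamma)^2$ produces exactly the stated bound. There is no real obstacle; the genuinely substantive steps---the comparison $\eta_t \leq L$ under Assumption \ref{A2} and the Brownian time-inversion $L \stackrel{D}{=} 1/H$---were carried out before the proposition, and the only point requiring care is matching the parameterization of the standard inverse Gaussian density to the particular drift $d$ and level $\bar\gamma$ arising here.
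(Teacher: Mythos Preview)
Your proposal is correct and follows exactly the paper's approach: the paper's argument is entirely contained in the text preceding the proposition, establishing $\eta_t \leq L \stackrel{D}{=} 1/H$ via \eqref{3.3}--\eqref{3.4} and then invoking the inverse Gaussian law of $H$ from \citet[p.\ 297]{Shreve}, which is precisely what you do. Your optional Girsanov derivation of the density is a nice addition but not present in the paper, which simply cites the result.
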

	This bound can be used directly to help plan the queueing simulations
	discussed earlier. In general, we expect this bound to be quite
	loose, so we will  achieve much better estimates of the approximation
	value of $u$ (for purposes of $X(t)$ by initializing the system in
	the empty state at time $t-u$) by simulating $\eta_t$ itself.

	However, the overestimations (\ref{3.3}) and (\ref{3.4}) are also
	very helpful in simulating $\eta_t$. Specifically, $\eta_t$ and
	$m(t)$ are determined by $(Z(s): 0\leq s\leq L)$ uniformly in $t$,
	so their generation of $(\eta(t),m(t))$ only involves simulating $Z$
	over a finite time horizon, independent of $t$.

	Given that the first step in such an algorithm involves generating
	$L,\ (Z(s)\colon 0\leq s \leq L)$ must be simulated conditional on
	$L$. Fortunately, much is learned about the distribution of $B$,
	conditional on $L$.
\paragraph{Result.}Conditional on $L,(B(s)\leq 0 \leq s \leq L)$ is
independent of $(B(s):s \geq L)$,
	\begin{align*}
		(B(s)\colon 0\leq s \leq L) &\ds \left( \sqrt{L} B^0(s/L)
		+ \bar\gamma s - \frac{sd}{L}:0\leq s \leq L\right),
	\end{align*}
	and
	\begin{align*}
		(B(L+s):s\geq 0) \ds (\beta(s): s\geq 0),
	\end{align*}
	where $(B^0(s): 0\leq s \leq 1)$ is a Brownian bridge 
process
	and $(\beta(s):s > 0)$ has a distribution given by
	\begin{align*}
		\P(\beta(\cdot) \in A) = \P(B(\cdot)\in A | B(t) \leq \bar
		\gamma t  \text{ for } t \geq 0).
	\end{align*}
	See p. 161 of \citet{bass2011stochastic} for details.

 These results lead to the following procedure for sampling $(\eta_{t},m(t))$:

  \begin{description}
     \item[Algorithm 1 : Exact Sampling of $(\eta_{t},m(t))$]
     \hspace{6cm}
      \begin{enumerate}[1]
      \item Generate a random variate $H$ from an inverse-Gaussian
      distribution with  mean $\mu =\bar \gamma/d$ and shape parameter	$
      \lambda =\bar \gamma^2$; set $L=1/H$ and $y=L\bar\gamma-d$.
      \item Letting $r=\min\{L,t\}$, draw a sample of the Brownian bridge
      $B(r)$ conditional on $B(L)=y$, and compute $Z(r)$.
      \item Conditional on $Z(r)$, draw a sample of $(\eta_{r},m(r))$;
      see Section \ref{SBB}.
\end{enumerate}
\end{description}
Note that for $t\geq L$, we have $m(t)=m(L)$ and $\eta_t=\eta_L$. Therefore,
the running time of this algorithm does not depend on $t$.
In the next section, we develop a more efficient algorithm in which we do
not require the simulation 
$Z$ over the entire interval $[0,L]$ to compute
the maximum of the process.

\section{Our Second Proposed Algorithm}

In this section, we provide an exact sampling method for generating the
triplet $(v_t, M(t), Y(t))$ whose complexity is independent of $t$. In
particular, this method can be used to generate $(v_\infty, M(\infty))$ in
finite expected time by assigning $t=\infty$. Aforementioned, by exploiting
the time transformation $\Lambda(\cdot)$ as per (\ref{time_T}), it is
sufficient to generate samples of $(\eta_{t},m(t),Z(t))$.

The key idea of the algorithm is based on constructing a finite
sequence of random disjoint intervals $(\alpha_1,\beta_1),\ldots,$
$(\alpha_K,\beta_K)$ such that the global maximum of $Z$ occurs over one of
these intervals. Suppose we can generate a sample path of $Z$ until $\beta_1$
such that $Z(\beta_1)$ is sufficiently lower than $m_1$, the maximum of the
sample path over $[0,\beta_1]$. In the 
case that the process $Z$ never rises up
the level $m_1$, the global maximum of $Z$ is $m_1$, and the procedure can
be terminated. Otherwise, one can generate a sample $\alpha_2$ from the
hitting time of the level $m_1$, and repeat the procedure starting from
$\alpha_2$. We show that in Lemma \ref{d_alpha} the procedure terminates
with at at least a constant probability in each iteration. Thus, the global
maximum is achieved in finite expected time. Figure \ref{algorithm1_figure}
illustrates this idea.
\begin{figure}[ht]\label{algorithm1_figure}
\begin{center}
  \caption{\small{Sample path of $Z(t)$ generated by Algorithm \ALGExact. }}
\includegraphics[ width=120 mm,height=80mm]{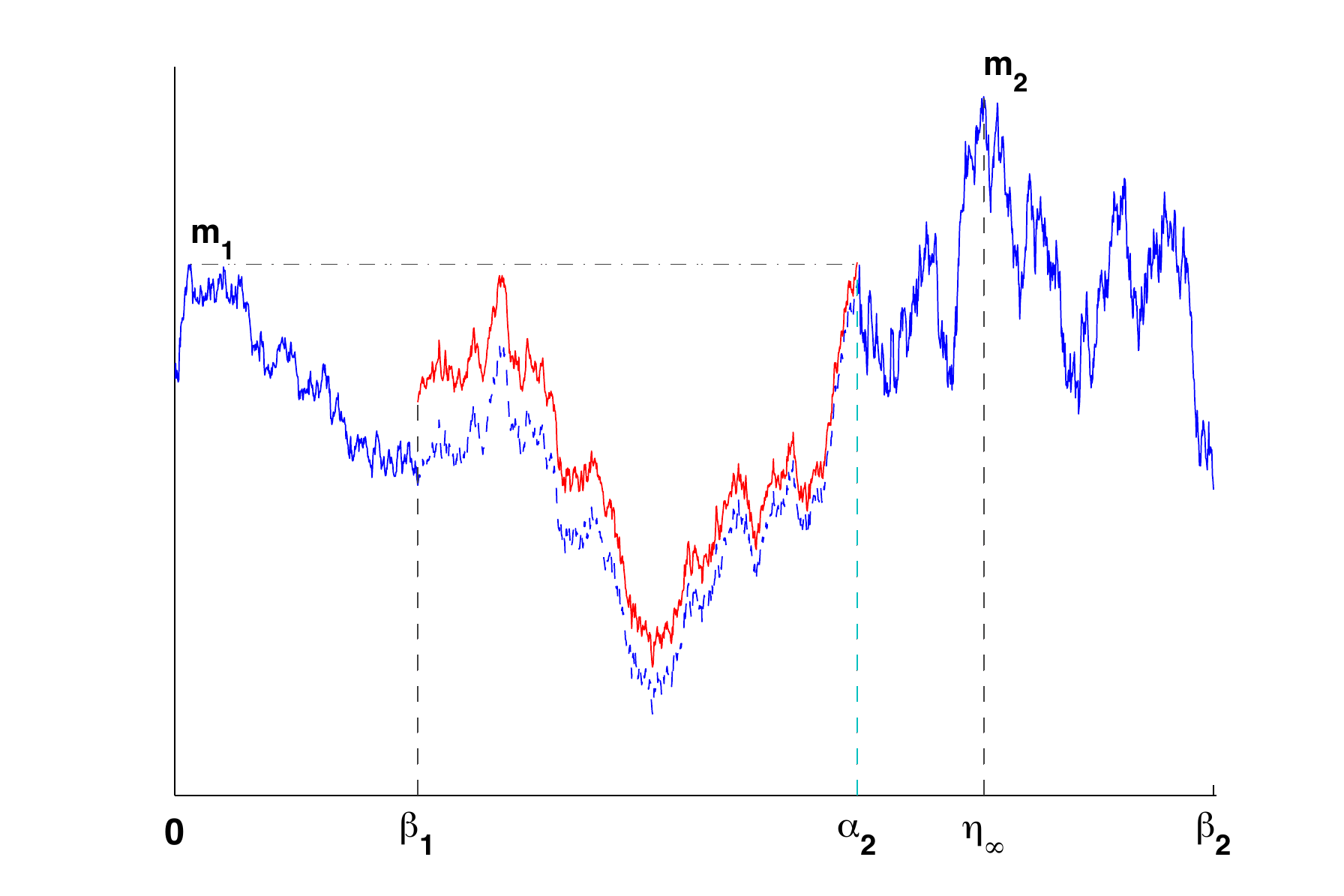}
\end{center}
{\small The red curve represents the dominating process $U_t$. The process
Z is not required to be simulated over the dashed-line. }
\end{figure}

To implement this idea, we need the following main 
components:
\begin{enumerate}[i.)]
\item A sampling mechanism to generate $(t_\zeta,\zeta,\beta)$. Here, $\beta$
is the first hitting time of the process $Z$ to a predetermined level,
$\zeta$ is the maximum of the process over the interval $[\alpha,\beta]$,
and $t_\zeta$ is the time at which the maximum occurs.
\item A testing procedure to check whether $Z(s)$ raises up level
$m(\beta)=\displaystyle\sup_{0\leq u\leq \beta} Z_u$ at some time $s\geq
\beta$.
\end{enumerate}
In Section \ref{TDBM}, we describe and analyze an algorithm to address
i.). For 
ii.), we construct a constant drift Brownian motion
$U^\beta=(U^\beta_s\colon s\geq \beta)$ dominating $Z(s)$. Let
\begin{align}\label{U}
U^\beta_s\df d+Z(\beta)+B(s)-B(\beta)- (s-\beta)\cdot \bar \gamma.
\end{align}
 By Assumption \ref{A2}, we can conclude that $Z(s)\leq U_{s}^\beta$ for
 all $s\geq \beta$. If for one of $\beta_k$ 
  $$U_{s}^{\beta_k} \leq m({\beta_k})=	\displaystyle\sup_{0\leq u\leq
  \beta} Z_u$$
holds true 
for all $s\geq \beta_k$, then $m({\beta_k})$ is the global maximum of
 $Z$. Since $U_{s}^\beta$ is a constant drift Brownian motion, we can
 precisely characterize its first hitting time distribution (see Lemma
 \ref{d_alpha}). Hence, we can easily check whether $U_{}^\beta$ hits
 level $m(\beta)$.

Below,
 we state the algorithm in 
detail. One can generate exact samples
of $(v_\infty, M(\infty))$ by setting $t=\infty$.
 \begin{description}
\item[Algorithm 2 : Exact Sampling of $(v_t, M(t), Y(t))$ ]\label{ALG1}
\hspace{6cm}
\begin{enumerate}[1]
\item Initialize $k=1$, $\zeta_0=\alpha_0=0$, $\eta_{t_0}=0$, and select
parameters $c>1$ 
and $\epsilon>0$.
   \item Generate a sample of $( t_\zeta, \zeta, \beta_{k})$ by subroutine
   \ref{ALG3}, where
 \begin{align}\label{beta2}
		 &  \beta_{k}=\inf \{\Lambda^{-1}(t)\geq s\geq \epsilon + \alpha_{k}
		 \colon Z(s)\leq m_{k}- cd\},\\
		  & \zeta_k=\displaystyle \max_{\alpha_{k}\leq s \leq
		  \beta_{k}}Z(s),\nonumber
\end{align}
and $t_{\zeta}$ is the time at which maximum occurs.
    \item \begin{description}
\item[-] If $\zeta_k>m_k$, 
update $m_{k+1}=\zeta$ and $\eta_{t_0}\leftarrow
t_{\zeta} $.
\item[-]  Otherwise, $m_{k+1} = m_{k} $.
\end{description}
  \item Sample $\alpha_{k+1}=\inf \{s\geq \beta_{k} \colon
  U_s^{\beta_k}\geq m_{k}\}$; see Lemma \ref{d_alpha}.
 \item If $\Lambda^{-1}(\alpha_{k+1})<t$, set $k\leftarrow k+1$, and repeat the procedure
 from Step 3.\\
 \hspace{4 in} Otherwise, terminate, and sample $Z(\Lambda(t))$ for $t<\infty$ conditional on $\alpha_{k+1}$.
  \item Return $(\Lambda^{-1}(\eta_{t_0}),m_k, Z(\Lambda(t)))$.

 \end{enumerate}

\end{description}
In Section \ref{analyseALG}, we show that
\[
\P(\alpha_k=\infty)\geq 1-\exp(2\bar\gamma(c-1)d).
\]
Therefore, the algorithm terminates in a 
finite number of iterations; call it
$K$. The expectation of $K$ has an upper bound independent of $t$ even in the
case $t=\infty$; see Theorem \ref{RT_alg1}. Furthermore, it is straightforward
to show that $\beta_K\leq L$; i.e, the length of the 
sample path generated by
Algorithm \ALGExact\ is less than $L$ which does not depend on $t$. Note that it is possible 
to determine whether $\Lambda^{-1}(\alpha_{k+1})<t$ or not in $\log(\alpha_{k+1})$, which is independent of $t$.

In the end, it is worth mentioning that we are not required 
to
simulate the process over the entire interval $[0,\beta_K]$ to compute
$\displaystyle\sup_{0\leq s\leq \beta_K}Z(s)$, since the maximum does not
occur 
between $\beta_k$ and $\alpha_{k+1}$ for $k=1,\ldots,K-1$. In contrast,
Algorithm 1, discussed in the previous section, requires generating 
the
full path of the process (conditional on $L$) over $[0,L]$. Clearly, this
is computationally more expensive than Algorithm  \ALGExact.

In the next section, we discuss Step 2, generating exact samples for the
maximum of $Z(t)$ over a finite interval.
\section[{Exact Sampling of Time-dependent Drift  Brownian Motion}]{Exact
Sampling of Time-dependent Drift  Brownian Motion}\label{TDBM}
This section describes an exact method for sampling the unit-volatility
time-dependent drift Brownian motion, and its maximum over a finite time
interval, 
which can be employed as a subroutine in Step 2 of Algorithm
\ALGExact. The method uses an acceptance/rejection mechanism similar to
that of \citet{Beskos2006}.

An acceptance/rejection scheme for exact simulation of state-dependent
diffusions was developed for certain one-dimensional
diffusions in \citet{Beskos2006}. The generation of the acceptance indicator
based on a thinning mechanism is proposed in \cite{beskos_retrospective}. Furthermore, this scheme is extended to a wider class of diffusions in \citet{chen2012}.  \citet{giesecke2011} generalized their method to jump-diffusions with state-dependent coefficients and jump intensity. Here, we develop a similar mechanism for time-dependent diffusions.

Recall that $Z =(Z(t) : t\geq 0)$ is a Brownian motion with time-dependent
drift $\gamma(t)$, so that the position at time $t$ is given by
\begin{align}
Z (t) = B(t) + \int_0 ^t  \gamma(s) ds.\label{gamma}
\end{align}

The objective of this section is to generate an exact sample of the maximum
of $Z $, before a fixed time $T_f$, or the first exiting time of the interval
$(v,u)$, 
where $v<0$ and $u>0$. More formally, we generate an exact sample of
$$(t_\zeta ,\sup_ {t \leq  T_f\wedge \tau_{v,u}}Z(t) ,	\tau_{v,u}),$$
where 
$$\tau_{v,u} = \inf \{t\geq 0: Z(t) \notin (v,u)\}$$
 is 
the first exit time of $Z$ from the interval $(v, u)$, and $t_\zeta$
 is the time at which the maximum of $Z$ over $[0,   T_f\wedge\tau_{v,u}]$
 occurs. We can assume that $T_f$ is equal to infinity, in that case
 generating exact samples of the first hitting time is possible.

The key idea is to generate a candidate sample path of a standard
Brownian motion and accept it as a sample path of the process $Z$ with
the probability proportional to the likelihood ratio between the law of
two processes. In Theorem \ref{likelihood}, we calculate this likelihood
ratio. Next, we construct a Bernoulli random variable $I$ for which $I=1$
with the acceptance probability proportional 
to the likelihood ratio, 
which
indicates the acceptance of the candidate.

 Let $\mathcal{F}=(\mathcal{F}_t: t\geq 0)$ be the filtration generated by
 the process $Z$ in (\ref{gamma}), and $\bar\tau$ be a stopping time with
 respect to this filtration. Let $ \P=	\P(z;s,t)$ be the probability measure
 on the $\sigma$-field $\mathcal{F}_{\bar \tau\wedge t}$ induced by the path
 $\{Z({u\wedge \bar \tau})\colon s \leq u \leq t \wedge \bar \tau\}$. Theorem
 \ref{likelihood} provides a formula for the likelihood ratio between $\P$
 and an equivalent measure $\Q = \Q(z;s,t)$ on $\mathcal{F}_{\bar \tau\wedge
 t}$ under which $\{Z({u\wedge \bar \tau})\colon s \leq u \leq t \wedge
 \bar \tau\}$ is a path of the standard Brownian motion stopped at $\bar
 \tau\wedge t$.

\begin{theorem}\label{likelihood}
Let $Z (t) = B(t) + \int_0 ^t  \gamma(s) ds$, $\gamma(t)$ be a continuously
differentiable function, and $\bar \tau$ be a finite value stopping time
with respect to the filtration $\mathcal{F}$. Then
for any event $B \in \mathcal{F}_{\bar \tau\wedge t}$, we have
\begin{align}
 \frac{\P(B)}{\Q(B)}=\EE^\Q\Big[\exp\left(\gamma(\bar \tau\wedge t)
 W^\Q_{\bar \tau\wedge t}-\frac{1}{2} \int_s^{\bar \tau\wedge t} \gamma^2(u)
 du - \int_s^{\bar \tau\wedge t} \gamma'(u)W^\Q_u du \right)\Big \vert
 B\Big]\label{ratio},
\end{align}
where $W_t^\Q$ is a $\Q$-Brownian motion starting at $W_s^\Q=Z(s)=z$.
\end{theorem}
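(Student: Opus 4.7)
The plan is to derive the density from standard Girsanov's theorem, then transform the stochastic integral in the Radon--Nikodym derivative into a pure path-functional via Itô's integration-by-parts formula, mirroring the trick that makes the Beskos--Papaspiliopoulos--Roberts exact algorithm possible.

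First, I would observe that because $\gamma$ is $C^1$, it is bounded on the bounded interval $[s, \bar\tau\wedge t]$ (here $\bar\tau\wedge t$ is bounded by the deterministic constant $t$), so Novikov's condition $\EE^\Q\bigl[\exp\bigl(\tfrac{1}{2}\int_s^{\bar\tau\wedge t}\gamma^2(u)\,du\bigr)\bigr] < \infty$ is automatic. Girsanov's theorem (in its stopped form, obtained by optional stopping on the exponential martingale $M_r = \exp\bigl(\int_s^r \gamma(u)\,dW^\Q_u - \tfrac12\int_s^r \gamma^2(u)\,du\bigr)$) then yields, on $\mathcal{F}_{\bar\tau\wedge t}$,
\begin{equation*}
\frac{d\P}{d\Q} = \exp\!\left(\int_s^{\bar\tau\wedge t}\gamma(u)\,dW^\Q_u - \tfrac{1}{2}\int_s^{\bar\tau\wedge t}\gamma^2(u)\,du\right),
\end{equation*}
so $\P(B)/\Q(B) = \EE^\Q[d\P/d\Q \mid B]$ for any $B \in \mathcal{F}_{\bar\tau\wedge t}$.

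Second, I would remove the stochastic integral via integration by parts. Because $\gamma$ is continuously differentiable, it is a finite variation process, so Itô's product rule gives $d(\gamma(u) W^\Q_u) = \gamma(u)\,dW^\Q_u + \gamma'(u) W^\Q_u\,du$. Integrating from $s$ to $\bar\tau\wedge t$ and using $W^\Q_s = z$,
\begin{equation*}
\int_s^{\bar\tau\wedge t}\gamma(u)\,dW^\Q_u = \gamma(\bar\tau\wedge t)\,W^\Q_{\bar\tau\wedge t} - \gamma(s)\,z - \int_s^{\bar\tau\wedge t}\gamma'(u)\,W^\Q_u\,du .
\end{equation*}
Substituting into the Girsanov density yields the claimed expression (up to the multiplicative constant $e^{-\gamma(s)z}$, which depends only on the starting data and is irrelevant for acceptance--rejection, since it cancels in the likelihood ratio used to form the Bernoulli $I$).

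The main technical obstacle is the interaction between the stopping time and Girsanov: since $\bar\tau$ need not be bounded a priori, one must apply Girsanov on the deterministic interval $[s,t]$ and then use optional stopping at $\bar\tau\wedge t$ (which is bounded) to conclude that the density restricted to $\mathcal{F}_{\bar\tau\wedge t}$ has the stated form; verifying that the localizing sequence passes to the limit is where the $C^1$ regularity of $\gamma$ (hence local boundedness of $\gamma$ and $\gamma'$) is used. Beyond this, the proof is essentially Girsanov plus an Itô integration by parts, and the reason to prefer the IBP form is practical rather than conceptual: the right-hand side contains only the terminal value $W^\Q_{\bar\tau\wedge t}$ and Lebesgue integrals of path-functionals of $W^\Q$, which is what enables the Poisson thinning construction of the acceptance indicator $I$ in the subsequent exact simulation step.
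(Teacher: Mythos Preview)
Your proposal is correct and follows essentially the same route as the paper: apply Girsanov's theorem (using Novikov's condition, which holds because $\gamma$ is continuous on a bounded interval) to obtain the Radon--Nikodym derivative in terms of a stochastic integral, then use It\^o's product rule on $\gamma(u)W^\Q_u$ to rewrite $\int_s^{\bar\tau\wedge t}\gamma(u)\,dW^\Q_u$ as a path functional. Your observation about the residual constant $e^{-\gamma(s)z}$ is sharp: the paper's own proof records the boundary term $\gamma(s)W_s$ from the integration by parts but then silently drops it in the final displayed formula, so the stated equality in the theorem is really an equality up to that deterministic multiplicative constant (which, as you note, is harmless for the subsequent acceptance--rejection construction).
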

\begin{proof}
The proof is based on the Girsanov theorem, and It\^o's formula. Consider
the supermartingale $M=(M(t) \colon {t\geq s})$ defined by
$$
\mathcal{M}(t)=\exp\left(-\int_s^{t\wedge\bar \tau} \gamma(u) dB(u) -\frac
12\int_s^{t\wedge\bar \tau} \gamma(u)^2 du  \right)
$$
Novikov's condition guarantees that $M$ is a martingale. By the 
Girsanov
theorem,
 $$\frac{d\Q}{d\P} \Big |_{\mathcal{F}_{\bar \tau\wedge t}}=\mathcal{M}(t),$$
 and under $\Q$ the process $W^\Q_{t\wedge\bar \tau}=z+B(t\wedge\bar
 \tau)-B(s)+\int_s^{t\wedge\bar \tau}\gamma(s)ds=Z_{t\wedge \bar \tau}$
 is a standard Brownian motion starting at $W_s=z$ and 
stopping at time
 $t\wedge\bar \tau$. By It\^o's formula and the differentiability assumption
 of $\gamma(t)$, we have
\[\gamma({t\wedge\bar \tau})W^\Q_{t\wedge\bar \tau}=\int_s^{t\wedge\bar \tau}
\gamma(u)dW^\Q_u+\int_s^{t\wedge\bar \tau} \gamma'(u)W^\Q_u du+\gamma(s)W_s.
\]
Thus,
\begin{align*}
\frac{1}{\mathcal{M}_{t\wedge\bar \tau}}&=\exp\Big(\int_s^{t\wedge\bar \tau}
\gamma(u) dB(u) +\frac 12\int_s^{t\wedge\bar \tau}\gamma(u)^2 du  \Big)\\
&=\exp\Big(\int_s^{t\wedge\bar \tau} \gamma(u) dW^\Q_u -\frac
12\int_s^{t\wedge\bar \tau} \gamma(u)^2 du  \Big)\\
&=\exp\left( \gamma({t\wedge\bar \tau})W^\Q_{t\wedge\bar
\tau}-\int_s^{{t\wedge\bar \tau}} \gamma'(u)W^\Q_udu-\frac
12\int_s^{{t\wedge\bar \tau}} \gamma(u)^2 du  \right).
\end{align*}
Therefore,
\[\frac{d\P}{d\Q}=\frac{1}{\mathcal{M}_{t\wedge\bar \tau}}=\exp\Big(
\gamma({t\wedge\bar \tau})W^\Q_{t\wedge\bar \tau}-\int_s^{{t\wedge\bar \tau}}
\gamma'(u)W^\Q_udu-\frac 12\int_s^{{t\wedge\bar \tau}} \gamma(u)^2 du  \Big)\]
 and $$\P(B)=\EE^\Q\left[\frac{1}{\mathcal{M}_{t\wedge\bar \tau}} I(\omega\in
 B)\right]=\EE^\Q\left[\frac{1}{\mathcal{M}_{t\wedge\bar\tau}}\Big \vert
 B\right]\Q( B).$$
\end{proof}

To generate an exact sample path of $Z$, we follow the \textit{localization}
method developed by \citet{chen2012} and \citet{giesecke2011}. Suppose we
have generated an exact sample of $(Z_u \ : \ u\leq s)$ for some random or
fixed time  $s< T_f \wedge \tau_{u,v}$. Define
$$\tau_a= \inf\{t\geq 0: |Z(t+s)-Z(s)|\geq a\},$$ where  $a=\min\{|u-y|,\
|v-y|,\  \theta\}$ for some $\theta>0$. By following an 
acceptance/rejection
procedure,  we generate a sample path of
$$\Big (Z(t):s\leq u\leq s+(\tau_a\wedge \Delta)\Big )$$ for some parameter
$\Delta$.

First, consider a sample path $\omega$ of $( W_u^\Q: s\leq u\leq s+\tau_a
\wedge \Delta )$, where $$\tau_a=\inf\{t>0: |W_{t+s}^\Q-W_{s}^\Q|\geq a\}.$$
For ease of exposition, we denote
$$W_u=W_{u+s}^\Q-W_{s}^\Q$$
for $0\leq u\leq \tau_a \wedge \Delta$, which is a standard Brownian motion
under measure $\Q$. Now, given a sample path $\omega$ as a candidate,
we construct a Bernoulli random variable $I$ with success probability
proportional to
\[
\P(I=1|\omega)\propto \frac{d\P(\omega)}{d\Q(\omega)}= \exp\Big(
\gamma(\tau+s)W_\tau-\int_0^{\tau} \gamma'(s+u)W_udu-\frac 12\int_0^{\tau}
\gamma(u+s)^2 du  \Big),
\]
where $\tau=\tau_a\wedge \Delta$. The candidate path $\omega$ is accepted
as a sample path of $\Big(Z(t):s\leq t\leq s+(\Delta\wedge\tau_a )\Big)$
if $I=1$; otherwise, we repeat the procedure.

 The Bernoulli indicator $I$ can be constructed by sampling the jump times
 of a 
doubly-stochastic  Possion process.  The continuously differentiable
 assumption implies that $\gamma(\cdot)$ and $\gamma' (\cdot)$ are bounded
 over the time interval $[s,s+\Delta]$. Let $m=\max_{t\in [s,s+\Delta]}
 |\gamma'(t)|$, and $\tilde m=\max_{t\in [s,s+\Delta]} |\gamma(t)|$.  Let
$$0\leq \phi_t=\gamma'(s+t)W_{t}+m a\leq 2 m a,$$ for every $0\leq t \leq
\Delta$. Let $V$ be a doubly-stochastic  Poisson process with intensity
$\phi_t$  for $0\leq t \leq \Delta$. The required indicator can be generated
by sampling the jump times of $V$ in $[0,\tau]$. The conditional probability
that no
jump occurs in the interval $[0,\tau]$ of the doubly-stochastic Poisson
process $V$ is $\exp\left(\int_0^{\tau} \phi_t dt\right)$.

Let $E_1$ and $E_2$ be two independent exponential random variables with
intensities $$\lambda_1=\tilde m a-\gamma(\tau+s) W_{\tau}\geq 0$$ 
and
$$\lambda_2= -\frac{1}{2} \int_s^{\tau+s} \gamma^2(u) du+ma(\Delta-\tau)\geq
0.$$  The success probability of Bernoulli random $I$ can be defined by
\begin{align*}
\P(I=1|\omega)= \exp(- \int_s^{\tau+s}\phi_u du)\times \P(E_1>\tau)\times
\P(E_2>\tau).
\end{align*}
Observe 
that
\begin{align}
\frac{d\P(\omega)}{d\Q(\omega)}={}&\exp(- \int_0^{\tau} (\gamma'(u+s)W_{u}
du+ma) du \Big)\nonumber\\
&\times \exp\Big(\gamma(\tau+s) W_{\tau}-\tilde m a)\nonumber\\
&\times \exp\Big(-\frac{1}{2} \int_s^{\tau+s} \gamma^2(u) du+ma(\tau-\Delta)
\Big) \exp(ma\Delta+\tilde m a )\nonumber\\
={}& \exp(- \int_0^{\tau}\phi_udu)\times \exp\Big(\gamma(\tau+s) W_{\tau}-\tilde
m a)\nonumber\\
&\times \exp\Big(-\frac{1}{2} \int_s^{\tau+s} \gamma^2(u) du+ma(\tau-\Delta)
\Big) \exp(ma\Delta+ \tilde m a)\nonumber\\
={}& \P(I=1|\omega)\times \exp(ma\Delta+ \tilde m a).\label{bound_ratio}
\end{align}
Generating 
exponential random variables $E_1$ and $E_2$ is
straightforward. The intensity $\phi_t$ is bounded above by $2 m  a$,
allowing us to simulate the event times of $V$
by thinning a Poisson process with intensity $2 m  a$. These properties
facilitate generating the Bernoulli
indicator for the acceptance test of a proposal skeleton.

 More precisely, let $\kappa_1,\kappa_2,\ldots,\kappa_b<\tau$ be the jump
 times of a Poisson process with rate $2 m a$, and $W_{\kappa_i}$ be the
 corresponding values of the 
candidate sample path for $i=1,\ldots,b$. Let
 $(U_1,U_2,\ldots,U_{b+2})$ be a sequence of $b+2$ uniform random
 variables. We accept the candidate path 
if
\begin {align}\label{acceptance_condition}
2ma U_i&> \gamma'(\kappa_i+s)W_{\kappa_i}+ma \mbox {   \ \ \ \ \ \ \ \ \
for } 1\leq i\leq b\\
 U_{b+1}&<\exp \Big( \gamma(\tau+s) W_{\tau}-\tilde m a\Big)\nonumber\\
U_{b+2}&<\exp \Big(\frac{-1}{2} \int_s^{\tau+s} \gamma^2(t) dt+a m
(\tau-\Delta) \Big).\nonumber
\end{align}

The 
sampling of an exit time $\tau_a$ for a Brownian motion $W^\Q$
is possible by following the method in \cite{Burq2008}. Moreover,  in
\cite{chen2012}, it is shown that exact sampling of $W$ at a sequence of
instances before $\tau$ is possible. Given a skeleton of the process at points
$\kappa_1,\kappa_2,\ldots,\kappa_b,\tau$, we can sample the maximum of the
process over $[0,\tau]$. For every $i=1,\ldots,b$, observe that $\widetilde
W_t= W_t+a$ is a Brownian meander for  $\kappa_i\leq t\leq \kappa_{i+1}$
given that $t\leq \tau_a$. Maxmeander algorithm in \cite{Devroy2010}
generates exact samples of the maximum of the Brownian meander and the time
at which the maximum occurs in constant expected time.
\subsection{Summary of the Procedure}
 For the reader's convenience, we summarize our basic algorithm for
 generating exact samples of the maximum of time-dependent BM over a
 finite time interval $[0,\tilde \tau] $, where $\tilde \tau =T_f\wedge
 \tau_{u,v}$. Let $\zeta=\displaystyle\sup_{0\leq t\leq \tilde \tau} Z(t)$
 and $t_\zeta$ be the time at which the maximum occurs. The algorithm
 generates the triplet $(t_\zeta,\zeta, Z_{ \tilde \tau})$. This procedure
 can be used as a subroutine in Step 2 of Algorithm  \ALGExact.

 The initial conditions are 
$\zeta=0$, $t_\zeta=0$, $n=1$, $s_1=0$, and
 select $\theta >0$ and $\Delta<T_f$ (see Remark \ref{para_selection}).
  \begin{description}
\item[Subroutine 1: Exact Sampling of $(t_\zeta,\displaystyle\sup_{t\leq
\tilde \tau}Z(t), Z_{ \tilde \tau})$]\label{ALG3}
\hspace{6cm}
\begin{enumerate}[1]
\item Set 
$a=\min\{|u-Z(s_n)|,|v-Z(s_n)|,\theta\}$.
\item Sample $\tau_a=\inf\{t\geq0  \ : \ |W_t|\geq a\}$; see \cite{Burq2008}.
\item Sample jump times $\kappa_1,\kappa_2,\ldots,\kappa_b<\tau_a\wedge
\min\{T_f-s_n,\Delta\}$ of a Poisson process with rate $2 m a$. Set
$\kappa_{b+1}=\tau_a\wedge \min\{T_f-s_n,\Delta\}$.
\item Sample $W_{\kappa_1},W_{\kappa_2},\ldots,W_{\kappa_b}, W_{\kappa_{b+1}},
W_{\tau_a}$ conditional on $\tau_a$; see p. 11 
of \cite{chen2012}.
\item Accept/reject the proposal skeleton
$$W_{\kappa_1},W_{\kappa_2},\ldots,W_{\kappa_b}, W_{\kappa_{b+1},}
W_{\tau_a}$$ as a sample of the skeleton
$(Z_{s_n+\kappa_1},Z_{s_n+\kappa_2},\ldots,Z_{s_n+\kappa_b},
Z_{s_n+\kappa_{b+1}}  )$ if condition (\ref{acceptance_condition}) holds.
\begin{description}
\item[-] If the proposal is rejected, go to Step 2.
\item[-] If the proposal is accepted, set $s_{n+1}=s_n+\tau_a\wedge
\min\{T-s_n,\Delta \}$, and continue.
\end{description}
\item For $i=1,\ldots,b$, sample jointly $\mu_i=\sup_{\kappa_i\leq t\leq
\kappa_{i+1} } W_t$ and $t_{\mu_i}$, 
the location of the maximum over $[ \kappa_i,
\kappa_{i+1}]$ conditioned on $\tau_a$ as the maximum of a Brownian meander;
see maxmeander algorithm in \citet{Devroy2010}.\\
 If  $\zeta< \mu_i$, update $\zeta\leftarrow \mu_i$ and $t_\zeta\leftarrow
 t_{\mu_i}$.
 \item	 If  $s_{n+1}<T$, increase $n\leftarrow n+1$, and go to Step
 1. Otherwise, stop and return $(t_\zeta,\zeta,Z_{s_{n+1}})$.
\end{enumerate}
\end{description}
\begin{figure}[ht]\label{algorithm3_figure}
 \caption{\small{Sample path of $Z(t)$ over $[s_n,s_{n+1}]$ generated by
 Subroutine \ALGAR. }}
\centering
\includegraphics[ width=120 mm,height=70mm]{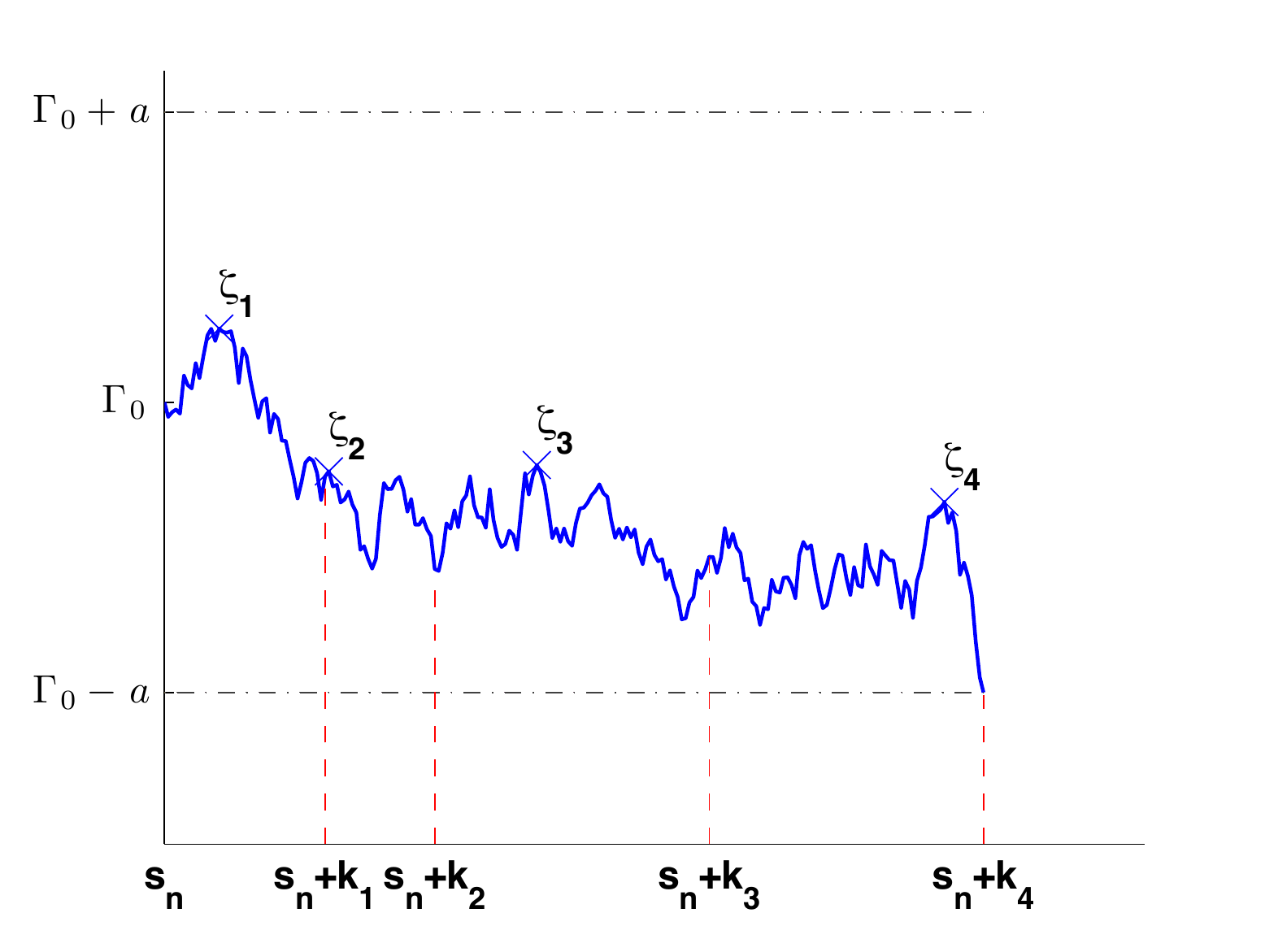}
 \end{figure}

The running time of this algorithm can be bound by $\mathcal{O}(1/|\bar
\gamma|)$. The following theorem shows this result.
\begin{assumption}\label{selecta}Assume that $$\frac{\theta^2}{\Delta}-
\theta (\tilde m+\Delta m/2)\geq 2\log 2,$$ where 
$m=\sup_{u\geq 0}\gamma'(u)$
and  $\tilde m=\sup_{u\geq 0}\gamma(u)$.
\end{assumption}
\begin{theorem}\label{RT-Alg2} Assume \ref{A1},  \ref{A2}, 
and \ref{selecta}
hold. For every $y>0$, let
$$\tau_{-y}=\inf\left\{ t \geq 0 \colon \ Z(t)\leq -y\right\}.$$
The running time of generating a skeleton of the sample path $\left (Z(t)
\ :  0\leq t\leq \tau_{-y}\right)$ by using the Subroutine \ALGAR\ can be
bounded by
 $\mathcal{O}\left(\frac{y+d+\theta}{|\bar \gamma|}\right)$ in expectation.
\end{theorem}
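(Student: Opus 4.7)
The plan is to bound the expected running time of Subroutine 1 as the product of the expected number $\EE[N]$ of outer iterations needed before $s_N=\tau_{-y}$ and the expected work per iteration, then to bound each factor separately. The three hypotheses enter in a clean division of labor: Theorem \ref{likelihood} together with Assumption \ref{selecta} controls the per-iteration work, while Assumption \ref{A2} controls the number of iterations through a hitting-time coupling.

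For the expected work per iteration, the main costs are drawing the jump times of the rate-$2ma$ thinning Poisson process on an interval of length at most $\Delta$, sampling the Brownian skeleton at those jump times, and running Maxmeander on each subinterval. Each of these is $O(1)$ per jump, and the expected number of jumps is at most $2m\theta\Delta$, a constant under Assumption \ref{selecta}. Combining (\ref{bound_ratio}) with $\EE^{\Q}[d\P/d\Q]=1$ identifies the unconditional per-proposal acceptance probability as $\exp(-ma\Delta-\tilde m a)\geq \exp(-m\theta\Delta-\tilde m\theta)$, and Assumption \ref{selecta} is tailored so that this quantity is bounded below by a strictly positive constant. Hence the expected number of proposals per iteration is $O(1)$, and one outer iteration costs $O(1)$ in expectation.

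For the expected number of iterations, the key observation is that $s_N=\tau_{-y}$ by construction. Assumption \ref{A2} with $s=0$ yields the pathwise domination $Z(t)\leq B(t)+d-\bar\gamma t$, so $\tau_{-y}$ is stochastically dominated by the first passage time of a $-\bar\gamma$-drift Brownian motion to level $-(y+d)$, whose mean is $(y+d)/|\bar\gamma|$. Each bulk iteration (those with $Z(s_n)+y\geq\theta$, hence $a=\theta$) advances $s_n$ by $\tau_\theta\wedge\Delta$, and the reflection principle together with the $2\log 2$ in Assumption \ref{selecta} gives $\P(\tau_\theta\geq\Delta)\geq 1/2$, so $\EE[\tau_\theta\wedge\Delta\mid\mathcal{F}_{s_{n-1}}]\geq\Delta/2$. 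An optional-stopping argument on $s_n$ minus its compensator then yields $\EE[N_{\text{bulk}}]=O((y+d)/|\bar\gamma|)$. Iterations in the boundary strip $[-y,-y+\theta]$ are handled separately: an elementary excursion bound for $Z$ there gives at most $O(\theta/|\bar\gamma|)$ additional iterations, yielding the overall bound $\EE[N]=O((y+d+\theta)/|\bar\gamma|)$ and, after multiplication by the $O(1)$ per-iteration cost, the theorem.

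The main obstacle will be precisely this boundary-strip contribution: when $a<\theta$, the lower bound $\EE[\tau_a\wedge\Delta]\geq\Delta/2$ may fail, the per-iteration increments are neither i.i.d.\ nor uniformly bounded below, so a direct Wald identity does not apply. I expect to resolve this by a potential-function or supermartingale argument controlling the expected time $Z$ spends in the strip before crossing $-y$, ensuring only an additive $O(\theta/|\bar\gamma|)$ contribution to $\EE[N]$.
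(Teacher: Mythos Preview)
Your approach matches the paper's: factor the expected running time as (per-iteration cost) $\times\ \EE[N]$; bound the former via the acceptance probability $e^{-m\theta\Delta-\tilde m\theta}$ (the paper packages this as Lemma~\ref{running_time_sample}); and bound $\EE[N]$ by a Wald-type inequality combining a per-step lower bound $\EE^\P[\tau_n\mid Z(s_n)]\geq\Delta/4$ (the paper's Lemma~\ref{tau_lower}, proved via Girsanov plus Doob's inequality rather than the bare reflection principle) with the drift-Brownian hitting-time estimate from Assumption~\ref{A2}. The one point of departure is the boundary strip you flag: instead of a bulk/boundary split, the paper simply replaces $\tau_{-y}$ by $\tau_{-(y+\theta)}$ in the Wald comparison and invokes Lemma~\ref{tau_lower} uniformly---this is where the $\theta$ in the final bound comes from---but since that lemma is stated only for the $\theta$-exit time, the case $a_n<\theta$ is not strictly covered there either, so your explicit boundary treatment is if anything more careful than the paper's own argument.
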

\begin{proof}
Let $N$ be the number of time intervals $[s_n,s_{n+1}]$  before the
termination of the procedure. Also, assume that $\tau_n=s_{n+1}-s_n$
for $n=1,\ldots,N$. According to Lemma \ref{running_time_sample},  we can
show that $\EE[R]$, the expectation of the running time of generating the
skeleton of the sample path $\left (Z(t) \ :  0\leq t\leq \tau_{-y}\right)$,
is 
bounded by
\begin{align}
\EE[R]\leq cm\theta e^{m \theta\Delta+\tilde m \theta} (1+\Delta) \EE
[N]\label{bound_runningtime}
\end{align}
for some constant $c$. By employing Lemma  \ref{tau_lower}, we can bound
$\EE[N]$. We have
\begin{align*}
\EE^{\P}[\tau_{-(y+\theta)}]&\geq \EE\left[ \sum_{n=1}^N \tau_n\right]\\
&\geq\EE\left[ \sum_{n=1}^\infty I(n\leq N) \EE^\P[\tau_n|Z(s_n)]\right]\\
&\geq\EE[N]\frac{ \Delta}{4}.
\end{align*}
The last inequality follows from Lemma \ref{tau_lower}. Therefore,
$$
\EE[N] \leq\frac{4}{ \Delta}\EE^{\P}[\tau_{-(y+\theta)}].
$$
Let $\tilde \tau_{y+\theta}=\inf\{t\geq 0 \colon B(t)-t\cdot\bar\gamma\leq
y+\theta+d\}$ be the first hitting 
time of the dominating constant drift Brownian motion.
It is clear that $\tau_{y+\theta}\leq \tilde \tau_{y+\theta+d}$ by Assumption
(\ref{A2}). From optional sampling theorem (\citet[p. 19]{Shreve}), we can
conclude that
\begin{align*}
       \EE^{\P}[\tau_{-(y+\theta)}]\leq \EE^{\P}[\tilde \tau_{y+\theta+d}]\\
       \leq  \frac{y+\theta+d}{|\bar \gamma|}.
\end{align*}
Therefore, we obtain
\begin{align}
\EE[N]\leq \frac{4(y+\theta+d)}{ \Delta|\bar \gamma|}\label{bound_N}.
\end{align}
From 
inequalities (\ref{bound_runningtime}) and (\ref{bound_N}), we
conclude that
$$\EE[R]\leq  \frac{4(y+\theta+d)}{ \Delta|\bar \gamma|} cm\theta e^{m
\theta \Delta+\tilde m \theta} (1+\Delta).
$$
Thus, the expected running time $\EE
R=\mathcal{O}\left(\frac{y+\theta+d}{|\bar\gamma|}\right)$.
\end{proof}
\begin{remark}\label{para_selection}In order to minimize the upper bound
of the running time expectation, we can choose $\Delta=\frac{1}{m \theta}$
and select $\theta$ such that assumption (\ref{selecta}) holds. A trial and
error procedure might also help to choose the optimal $\Delta$ and $\theta$.
\end{remark}
\begin{lemma}\label{running_time_sample}Assume that (\ref{selecta})
holds. The expectation of the running time for generating the skeleton of
$\left (Z(t) \ :  s_n\leq t\leq s_{n+1}\right)$ is $$\mathcal{O}(mae^{m
a\Delta+\tilde m a} (1+ \Delta)).$$ \end{lemma}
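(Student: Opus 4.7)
The strategy is to factor the expected running time for one iteration of Subroutine \ref{ALG3} into (i) the expected number of proposal trials until acceptance, and (ii) the expected computational cost per trial, and then bound these two quantities separately using the acceptance test construction and standard Poisson/Brownian sampling costs.

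For (i), I would start from the identity in (\ref{bound_ratio}), which gives
\[
\P(I=1\mid \omega) \;=\; \frac{d\P(\omega)}{d\Q(\omega)}\,\exp\bigl(-(ma\Delta+\tilde m a)\bigr).
\]
Integrating against $\Q$ and using $\EE^{\Q}[d\P/d\Q]=1$ on the localized $\sigma$-field $\mathcal{F}_{s_n+(\tau_a\wedge\Delta)}$ (since $\bar\tau = s_n + \tau_a\wedge\Delta$ is a bounded stopping time and the Girsanov exponential is a true martingale under Novikov), the unconditional per-trial acceptance probability equals $\exp(-(ma\Delta+\tilde m a))$. Consequently the number of trials until the first acceptance is geometric with mean $\exp(ma\Delta+\tilde m a)$.

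For (ii), I would enumerate the unit operations used within a single proposal: (a) sampling the exit time $\tau_a$ of a standard Brownian motion from $(-a,a)$ by the method of \citet{Burq2008} in expected $\mathcal{O}(1)$ work; (b) drawing the jump times of a rate-$2ma$ Poisson process on $[0,\tau_a\wedge\Delta]$, which has expected count at most $2ma\Delta$; (c) sampling the candidate Brownian skeleton at these $b$ times together with $\tau_a$ in $\mathcal{O}(b)$ time via the exit-time-conditioned bridge construction of \citet{chen2012}; (d) testing the $b+2$ inequalities in (\ref{acceptance_condition}), each in $\mathcal{O}(1)$ time; and, upon acceptance, (e) running the Maxmeander algorithm of \citet{Devroy2010} on each of the $b+1$ sub-intervals in expected $\mathcal{O}(1)$ per piece. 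Summing and taking expectations against the Poisson number of jumps yields expected per-trial work $\mathcal{O}(1+ma\Delta)$.

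Multiplying (i) and (ii) gives expected running time $\mathcal{O}\bigl(e^{ma\Delta+\tilde m a}(1+ma\Delta)\bigr)$, and since $1+ma\Delta \le ma(1+\Delta)$ whenever $ma\ge 1$ (which is the interesting regime, and otherwise both sides are absorbed into the $\mathcal{O}$), we obtain the claimed $\mathcal{O}\bigl(ma\, e^{ma\Delta+\tilde m a}(1+\Delta)\bigr)$. The one subtle step, and the likely main obstacle, is step (i): one has to be careful that the rejection-sampling interpretation is genuine, i.e., that integrating the conditional acceptance probability against the $\Q$-law of the candidate skeleton really does produce an unconditional acceptance probability equal to $e^{-(ma\Delta+\tilde m a)}$. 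This requires verifying Novikov's condition for $\gamma$ on $[s_n,s_n+\Delta]$ (which follows from the continuous differentiability of $\gamma$ in Assumption \ref{A1}) and invoking the optional stopping argument for the Girsanov martingale at the bounded stopping time $\tau_a\wedge\Delta$, so that $\EE^{\Q}[d\P/d\Q]=1$ exactly. Once that is in hand, the remaining work is routine Poisson/Brownian complexity accounting.
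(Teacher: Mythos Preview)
Your proposal is correct and follows essentially the same decomposition as the paper's proof: factor into (i) the geometric mean number of trials $e^{ma\Delta+\tilde m a}$ obtained from (\ref{bound_ratio}), and (ii) the per-trial cost driven by the expected Poisson jump count $2ma\,\EE^{\Q}[\tau_a\wedge\Delta]\le 2ma\Delta$, with each skeleton point sampled in $\mathcal{O}(1)$. The paper is terser---it does not spell out the Novikov/optional-stopping justification you flag in step (i) and simply asserts $\P(I=1)=e^{-(ma\Delta+\tilde m a)}$---so your extra care there is welcome but not a departure in method.
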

\begin{proof}
The expectation of $b$, the number of events of the Poisson process with
rate $2ma$ that occur in the time interval $[0,\tau_a\wedge \Delta]$ is
$$2ma\EE^Q[\tau_a\wedge \Delta].$$
Sampling $W_{k_i}$ from a Brownian meander for each $i=1,\ldots,b+1$ in Step
3 is possible in a constant time. Therefore, the running time of generating
a skeleton for each candidate sample path is
$$cma(1+\EE^Q[\tau_a\wedge \Delta])$$
for some constant $c$. The probability of accepting each candidate sample
path is
$$\P(I=1)=\exp(-ma\Delta-\tilde m).$$
 Therefore, the expected number of candidate sample paths 
which are
 generated 
before acceptance occurs 
is $\exp(ma\Delta+\tilde m)$. Thus,
 the expected running time to generate a skeleton for $\left (Z(t) \ :
 s_n\leq t\leq s_{n+1}\right)$ is at most
  $$c mae^{m a\Delta+\tilde m a} (1+\EE^Q[\tau_a\wedge \Delta])\leq c mae^{m
  a\Delta+\tilde m a} (1+ \Delta)$$
 for some constant $c$.
 \end{proof}
 \begin{lemma}\label{tau_lower}
 Under assumption (\ref{selecta}), we have
 $$\EE^\P [\tau_n\mid Z(s_n)]\geq  \Delta/4,$$
 where 
$\tau_n=\inf\{t\geq 0\colon |Z(t+s_n)-Z(s_n)|\geq \theta \}$.
 \end{lemma}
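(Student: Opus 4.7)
The plan is to upgrade the claim to the stronger assertion that $\P(\tau_n > \Delta/2 \mid Z(s_n)) \geq 1/2$, from which the lemma follows immediately by Markov's inequality: $\EE^\P[\tau_n \mid Z(s_n)] \geq (\Delta/2)\cdot \P(\tau_n > \Delta/2 \mid Z(s_n)) \geq \Delta/4$.

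To estimate this probability, I would decompose $Z(t+s_n)-Z(s_n) = \tilde B(t) + D(t)$, where $\tilde B(t) := B(t+s_n)-B(s_n)$ is a standard Brownian motion independent of $\mathcal{F}_{s_n}$ and $D(t) := \int_0^t \gamma(u+s_n)\,du$ is deterministic. The triangle inequality gives
\[
\sup_{0 \leq t \leq \Delta/2}|Z(t+s_n)-Z(s_n)| \leq \sup_{0 \leq t \leq \Delta/2}|\tilde B(t)| + \sup_{0 \leq t \leq \Delta/2}|D(t)|.
\]
Writing $\gamma(u+s_n) = \gamma(s_n) + \int_0^u \gamma'(v+s_n)\,dv$ and integrating twice produces the pointwise bound $|D(t)| \leq \tilde m\, t + m\, t^2/2$, hence $\sup_{0 \leq t \leq \Delta/2}|D(t)| \leq \alpha := \tilde m \Delta/2 + m\Delta^2/8$. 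Consequently the event $\{\sup_{0\leq t \leq \Delta/2}|\tilde B(t)| < \theta-\alpha\}$ is contained in $\{\tau_n > \Delta/2\}$.

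Doob's inequality applied to the exponential martingale $\exp(\lambda\tilde B_t - \lambda^2 t/2)$ gives the Gaussian tail $\P(\sup_{0\leq t\leq \Delta/2}|\tilde B(t)| \geq y) \leq 2\exp(-y^2/\Delta)$ for $y>0$, so
\[
\P(\tau_n > \Delta/2 \mid Z(s_n)) \geq 1 - 2\exp\!\Bigl(-\frac{(\theta-\alpha)^2}{\Delta}\Bigr).
\]
It remains to verify that assumption \ref{selecta} forces $(\theta-\alpha)^2/\Delta \geq 2\log 2$. Using $(\theta-\alpha)^2 \geq \theta^2 - 2\theta\alpha$ and $2\theta\alpha/\Delta = \theta\tilde m + \theta m\Delta/4$, assumption \ref{selecta} yields
\[
\frac{(\theta-\alpha)^2}{\Delta} \geq \frac{\theta^2}{\Delta} - \theta\tilde m - \frac{\theta m\Delta}{4} = \Bigl[\frac{\theta^2}{\Delta} - \theta\Bigl(\tilde m + \frac{m\Delta}{2}\Bigr)\Bigr] + \frac{\theta m\Delta}{4} \geq 2\log 2,
\]
which makes $1 - 2 e^{-(\theta-\alpha)^2/\Delta} \geq 1 - 2\cdot 2^{-2} = 1/2$.

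The only mild obstacle is the final arithmetic step: assumption \ref{selecta} is evidently calibrated precisely so that the Taylor-expansion penalty $m\Delta^2/8$ incurred when bounding $D$ is absorbed with slack $\theta m\Delta/4>0$ inside the $\theta(\tilde m + m\Delta/2)$ budget of the assumption. Every other ingredient (triangle inequality, Taylor, Doob's maximal inequality) is standard.
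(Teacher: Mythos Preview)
Your proof is correct and reaches the same intermediate conclusion as the paper---namely $\P(\tau_n>\Delta/2\mid Z(s_n))\geq 1/2$---but by a genuinely different route. The paper works under the change of measure from Theorem~\ref{likelihood}: it writes $\P(\sup_{u\le\Delta/2}|Z(u+s_n)-Z(s_n)|\ge\theta)$ as a $\Q$-expectation, bounds the Radon--Nikodym derivative on the stopped path by $\exp(m\theta\Delta/2+\tilde m\theta)$ via (\ref{bound_ratio}), and then applies Doob's inequality to the $\Q$-Brownian motion $W$ to get the factor $2e^{-\theta^2/\Delta}$; Assumption~\ref{selecta} is then invoked in the form $m\theta\Delta/2+\tilde m\theta-\theta^2/\Delta\le-2\log 2$. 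You instead stay under $\P$ and use the additive decomposition $Z-Z(s_n)=\tilde B+D$, bound the deterministic drift piece $D$ pointwise, and apply Doob to $\tilde B$ directly. Your approach is more elementary (no Girsanov, no appeal to Theorem~\ref{likelihood} or (\ref{bound_ratio})) and makes transparent why the particular combination $\theta(\tilde m+m\Delta/2)$ appears in Assumption~\ref{selecta}; the paper's approach has the advantage of reusing the likelihood-ratio machinery that is already central to Subroutine~\ref{ALG3}. One minor remark: your Taylor step is slightly more than you need---the cruder bound $|D(t)|\le\tilde m t$ would already give $\alpha=\tilde m\Delta/2$ and the arithmetic would still close under Assumption~\ref{selecta}.
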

 \begin{proof}
By applying Theorem \ref{likelihood} and equality (\ref{bound_ratio}), we have
\begin{align*}
\P\left(\sup_{u\leq \Delta/2 }|Z({u+s_n})-Z(s_n)|\geq \theta \right)&=
\EE^Q\left [ I\left(\sup_{u\leq \Delta/2} |W_u|\geq \theta\right)\frac{\P(W\in
d\omega)}{\Q(W\in d\omega)} \right]\\
&\leq \exp(m\theta \Delta/2+\tilde m \theta) \Q\left(\sup_{u\leq \Delta/2}
|W_u|\geq \theta\right)\\
&\leq  2\exp(m\theta \Delta/2+\tilde m \theta-\theta^2/\Delta).
\end{align*}
The 
last inequality follows from 
Doob's martingale inequality. From
Assumption (\ref{selecta}), we can conclude that
$$
\P(\tau_n\geq  \Delta/2)= \P\left(\sup_{u\leq  \Delta/2
}|Z_{u+s_n}-Z(s_n)|\leq \theta \right)\geq 1/2.
$$
Therefore, we 
have
$$\EE^\P [\tau_n|Z_{s_n}]\geq  \frac{\Delta}{2} \P( \tau_n\geq
\Delta/2)\geq\frac{\Delta}{4}.$$
\end{proof}

\section[Exact Sampling of Time-dependent Drift Brownian Bridge]{Exact
Sampling of Time-dependent Drift Brownian Bridge}\label{SBB}
This section provides an exact method for sampling the maximum of a
unit-volatility time-dependent drift Brownian bridge and  the time at which
this maximum occurs. We define a time-dependent drift Brownian bridge as a
time-dependent Brownian motion given the prescribed values at the beginning
and end of the process. Let $Z^{Br,r}=(Z(t) \colon 0\leq t\leq r)$ be a
time-dependent drift Brownian bridge given $Z_r=y$. Recall that
$$Z(t)=B(t)+\int_0^t\gamma(u)du.$$
 The objective of this section is to generate an exact sample of the
 maximum of
 $$m(r)=\sup_{0\leq s\leq r} Z(s)$$ conditioned on $Z(r)=y$ and the time
 $\eta_r$ at which the maximum occurs.	As we proposed in Algorithm 1, the
 procedure of sampling the joint 
variables $(\eta_r,m(r))$ conditioned
 on $Z(r)=y$ can be employed as a subroutine to generate exact samples
 of ($\eta_\infty,\sup_{t\geq 0} Z(t))$ as an alternative to Algorithm
 \ALGExact. Although, as we discussed earlier, Algorithm \ALGExact\ is more
 efficient 
compared to this approach.

The procedure of generating exact samples of $(\eta_r,m(r))$ conditioned on
$Z(r)=y$ is similar to Subroutine 1, the exact sampling of the time-dependent
Brownian motion. The main difference is that it uses a Brownian bridge rather
than a standard Brownian motion as a candidate sample path. We accept this
candidate as a sample path of $Z^{Br,r}$ with the probability proportional
to the likelihood ratio between the law of two processes. Theorem \ref{LR_BR}
computes 
this likelihood ratio.

Similar to Subroutine \ALGAR\, 
and following the localization method, the
sample path of $Z^{Br,r}$ can be generated piece by piece over the time
intervals $[s_n,s_n+\Delta\wedge \tau_a]$ where
$$\tau_a=\inf\{t\geq 0\colon |Z(t+s_n)-Z(s_n)|\geq a\}$$
for appropriate parameters $a$ and $\Delta$ and given $(s_n,Z(s_n))$.

Assuming $s_n+\Delta< r$, sampling $Z({s_n+\Delta})$ given $Z(r)=y$ is
straightforward. Let
$$\tilde x=x-\int_0^{s_n+\Delta}  \gamma(u)du, \tilde y=y-\int_0^{r}
\gamma(u)du.$$ Observe that
\begin{align*}
\P\Big (Z({s_n+\Delta})\in dx \mid Z_{r}=y,  Z(s_n)\Big)= \P\Big
(B({s_n+\Delta})\in d\tilde x \mid B({r})=\tilde y,  B({s_n})\Big),
\end{align*}
which is the distribution of a standard Brownian bridge. Sampling
of Brownian bridge is 
well known, and one can conveniently generate a
sample of $Z({\Delta}+s_n)$ given $Z_{r}=y$ and $Z(s_n)$ . Therefore, it
suffices to generate a sample of the process $Z^{Br,r}$ (and its maximum)
over the time interval $[s_n,s_n+\Delta\wedge \tau_a]$ given $Z(s_n)$ and
$Z({s_n+\Delta})$. By using 
 time shifting, we can assume that $s_n=0$. Thus,
the problem reduces to generating 
an exact sample of $(\eta_\Delta,\sup_{0\leq
t\leq \Delta}Z(t))$ given that $Z( \Delta )=x$.

The next theorem provides the likelihood ratio between the law of the
process $Z^{Br,\Delta}$ and a standard Brownian bridge, which is used as
the acceptance probability in the 
acceptance/rejection scheme.
\begin{theorem}\label{LR_BR}
Assume that $\gamma(s)$ is a continuously differentiable function, and let
$\tau_a=\inf\{t\geq 0\colon |Z(t)-Z(0)|\geq a\}$. Then, for some constant
$c_1$, we 
obtain
\begin{align}
 \frac{\P\Big(\ (Z(t))_{0\leq t\leq \Delta\wedge \tau_a}\in d\omega\Big\vert
 Z(\Delta)=x\ \Big)}{\Q\Big((W_t)_{0\leq t\leq \Delta\wedge \tau_a}\in
 d\omega\Big\vert W_\Delta=x\Big)}=c_1 \P(I=1\vert\omega)\times
 e^{\psi(\omega)},
  \end{align}
  where
\[
\psi(\omega)=\left\{
  \begin{array}{l l}
   0 & \quad \text{if $\tau_a\geq \Delta$}\\
\frac{1}{2(\Delta-\tau_a)}\left((x-\tilde a)^2-(x-\tilde
a-\int_{\tau_a}^\Delta \gamma(u)du)^2\right)  & \quad \text{if $\tau_a<
\Delta$,}
  \end{array} \right.\]
$\tilde a=Z({\tau_a})=\pm a$,
and $\P(I=1\vert\omega)$ is defined in
  (\ref{bound_ratio}).
 \end{theorem}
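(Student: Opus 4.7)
The plan is to combine Theorem~\ref{likelihood}, which supplies the unconditional Radon--Nikodym derivative, with Bayes' rule to transfer the endpoint conditioning from $\{Z(\Delta)=x\}$ under $\P$ to $\{W_\Delta=x\}$ under $\Q$. Let $\P^c$ and $\Q^c$ denote those conditional measures restricted to $\mathcal{F}_{\tau_a\wedge\Delta}$, and write $p_\P(dx\mid\omega)$ (resp.\ $p_\Q(dx\mid\omega)$) for the conditional density of the terminal value $Z(\Delta)$ (resp.\ $W_\Delta$) given the stopped path $\omega\in\mathcal{F}_{\tau_a\wedge\Delta}$, with $p_\P(dx)$ and $p_\Q(dx)$ for the corresponding marginals.

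First, I would write Bayes' rule in the form
\begin{align*}
\frac{d\P^c}{d\Q^c}(\omega)\;=\;\frac{d\P}{d\Q}(\omega)\cdot\frac{p_\P(dx\mid\omega)}{p_\Q(dx\mid\omega)}\cdot\frac{p_\Q(dx)}{p_\P(dx)}.
\end{align*}
The unconditional ratio $d\P/d\Q$ is exactly the object supplied by Theorem~\ref{likelihood}, and the computation leading to (\ref{bound_ratio}) expresses it as $\P(I=1\mid\omega)\cdot\exp(ma\Delta+\tilde m a)$.

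Next, I would evaluate the conditional-density ratio pathwise according to whether $\tau_a\geq\Delta$ or $\tau_a<\Delta$. On $\{\tau_a\geq\Delta\}$ the stopped path $\omega$ already determines $Z(\Delta)$ (equivalently $W_\Delta$), so both conditional laws are Dirac masses at a common value, their ratio is $1$, and this matches $\psi(\omega)=0$. On $\{\tau_a<\Delta\}$ I would invoke the strong Markov property at $\tau_a$: because $Z(\tau_a)=\tilde a\in\{\pm a\}$ is $\mathcal{F}_{\tau_a}$-measurable and the drift $\int_{\tau_a}^{\Delta}\gamma(u)\,du$ depends only on $\tau_a$,
\begin{align*}
Z(\Delta)-Z(\tau_a)\sim\mathcal{N}\!\left(\int_{\tau_a}^{\Delta}\gamma(u)\,du,\ \Delta-\tau_a\right)\text{ under }\P,\quad W_\Delta-W_{\tau_a}\sim\mathcal{N}(0,\ \Delta-\tau_a)\text{ under }\Q.
\end{align*}
Taking the ratio of the two Gaussian densities at $x$ and substituting $Z(\tau_a)=\tilde a$ reproduces $\exp(\psi(\omega))$ in exactly the stated form.

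Finally, $p_\Q(dx)/p_\P(dx)$ and $\exp(ma\Delta+\tilde m a)$ are both independent of $\omega$ and collapse into the single prefactor $c_1$, giving the claimed identity. The main obstacle is conceptual rather than computational: rigorously defining the conditional density of the terminal value given the stopped $\sigma$-algebra $\mathcal{F}_{\tau_a\wedge\Delta}$ and justifying the Bayes step in that setting---most cleanly via a regular conditional distribution and a disintegration of each of $\P,\Q$ over $Z(\Delta),W_\Delta$, so that the Dirac case $\tau_a\geq\Delta$ and the absolutely continuous case $\tau_a<\Delta$ can be treated uniformly. Once that framework is in place, the Gaussian-density calculation in Case $\tau_a<\Delta$ is a one-line computation.
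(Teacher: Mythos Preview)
Your proposal is correct and follows essentially the same route as the paper's proof: Bayes' rule to pass from the unconditional Radon--Nikodym derivative of Theorem~\ref{likelihood} (in the form (\ref{bound_ratio})) to the endpoint-conditioned one, the strong Markov property at $\tau_a$ to reduce the conditional terminal-value ratio to a ratio of Gaussian densities, and absorption of the $\omega$-independent factor $\exp(ma\Delta+\tilde m a)\cdot p_\Q(dx)/p_\P(dx)$ into $c_1$. The only difference is that you are more explicit about the measure-theoretic justification (regular conditional distributions and the Dirac case on $\{\tau_a\geq\Delta\}$), which the paper treats formally.
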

  The proof is in the 
Appendix. Observe that $$\psi(\omega)\leq \tilde
  m/2(x+a+\tilde m \Delta),$$ where $\tilde m=\sup_{0\leq t\leq
  \Delta}\gamma(t)$.
Therefore, it is easily conceivable to construct a Bernoulli random variable
with success probability proportional 
to $\exp(\psi(\omega))$.

Constructing the indicator $I$ is possible by modifying Steps 2, 3,
and 
4 in Subroutine \ALGAR. The main difference is in sampling $(\tau_a,
W_{\kappa_1},W_{\kappa_2},\ldots,W_{\kappa_b})$ conditional on $W_\Delta=x$,
where $\kappa_1,\kappa_2,\ldots,\kappa_{b}<\tau_a\wedge \Delta$ are the
jump times of a Poisson process with rate $2ma$. The procedure consists of:
\begin{enumerate}[\hspace{6 mm} $1^\prime$]
\setcounter{enumi}{1}
\item Sample $(\tau_a\wedge \Delta,W_{\tau_a\wedge \Delta})$ given that
$W_\Delta=x$.
\item Sample $\kappa_1,\kappa_2,\ldots,\kappa_b<\tau_a\wedge \Delta$, 
which
are the jump times of a Poisson process with rate $2 m a$.
\item Consider two different scenarios:
\begin{description}
\item[-]  If  $\tau_a\leq  \Delta$,  sample
$W_{\kappa_1},W_{\kappa_2},\ldots,W_{\kappa_b}$ conditional on
$(\tau_a,W_{\tau_a})$.
\item[-]  If  $\tau_a\leq  \Delta$,  sample
$W_{\kappa_1},W_{\kappa_2},\ldots,W_{\kappa_b}$ conditional on $W_\Delta=x$
and $\tau_a\leq  \Delta$.
\end{description}
\end{enumerate}
The other steps of the procedure to generate a sample of
$(\eta_\Delta,m(\Delta))$ are exactly same as steps 5 and 
6 in Subroutine \ALGAR.

Step 
$2^\prime$ is viable by using Theorem \ref{tau_Bridge} in which
we compute the likelihood ratio between distributions of the first hitting
time of a Brownian bridge and a standard Brownian motion.
\begin{theorem}\label{tau_Bridge}  Let $\tau_a= \inf\{t\geq 0: |W_{t}|\geq
a\}$ be the first hitting time of a standard Brownian motion $W$. Suppose
that $r=\frac{|x|}{a}<1$, then we have
\begin{align}
\Q(\tau_a\geq\Delta \vert
W_{\Delta}=x)=\left(1-e^{-\frac{2(1-r)}{\Delta}}\right)\cdot p\left
(0,1,\Delta,1-r\right),
\end{align}
where 
$p(s,x,t,y)$ is defined in (\ref{Ap2}).
Moreover, for some constant $c_2$, we have
\begin{align}
\frac{\Q\left(\tau_a\in dt, W_ {\tau_a}=\tilde a\vert
W_{\Delta}=x\right)}{\Q\left(\tau_a\in dt, W_ {\tau_a}=\tilde
a\right)}=c_2g\left(\frac{x-\tilde a}{\sqrt{\Delta-t}}\right)\leq
\frac{c_2}{\sqrt{2\pi e}\cdot |a-|x||}\label{AR_tau}
\end{align}
 for every $t< \Delta$, where $g(\cdot)$ is the Gaussian function and
 $\tilde a=\pm a$.
\end{theorem}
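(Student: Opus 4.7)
The plan is to treat the two claims separately: the Radon--Nikodym ratio (\ref{AR_tau}) and its uniform upper bound follow from the strong Markov property at $\tau_a$ plus a one-dimensional maximization, while the survival probability identity reduces to a reflection-principle calculation for a two-sided bridge combined with the explicit definition of $p$ recalled in (\ref{Ap2}).

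For (\ref{AR_tau}), I would apply the strong Markov property of $W$ at the stopping time $\tau_a$: conditional on $\tau_a=t$ and $W_{\tau_a}=\tilde a$, the increment $W_\Delta-\tilde a$ is centered Gaussian with variance $\Delta-t$, independent of $\mathcal{F}_{\tau_a}$. This factorizes the joint law as
\begin{align*}
\Q(\tau_a\in dt,\ W_{\tau_a}=\tilde a,\ W_\Delta\in dx)
= \Q(\tau_a\in dt,\ W_{\tau_a}=\tilde a)\cdot\frac{1}{\sqrt{2\pi(\Delta-t)}}\,e^{-(x-\tilde a)^2/(2(\Delta-t))}\,dx.
\end{align*}
Dividing by the marginal $\Q(W_\Delta\in dx)=(2\pi\Delta)^{-1/2}e^{-x^2/(2\Delta)}\,dx$ and absorbing factors depending only on $(\Delta,x)$ into the constant $c_2$ gives exactly the claimed ratio, with the $(\Delta-t,\tilde a)$-dependence appearing as a Gaussian kernel evaluated at $(x-\tilde a)/\sqrt{\Delta-t}$. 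The uniform bound then reduces to maximizing $u^{-1/2}\exp\bigl(-(x-\tilde a)^2/(2u)\bigr)$ over $u=\Delta-t>0$: elementary calculus yields maximum value $e^{-1/2}/|x-\tilde a|$ at $u=(x-\tilde a)^2$, and since $\tilde a\in\{-a,a\}$ with $|x|<a$ we have $|x-\tilde a|\geq a-|x|=|a-|x||$. Combining with the $1/\sqrt{2\pi}$ normalization of the Gaussian density produces the stated bound $c_2/(\sqrt{2\pi e}\,|a-|x||)$.

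For the survival probability identity, I would write
\begin{align*}
\Q(\tau_a\geq\Delta\mid W_\Delta=x) = \frac{\Q(\max_{0\leq s\leq\Delta}|W_s|<a,\ W_\Delta\in dx)}{\Q(W_\Delta\in dx)}
\end{align*}
and evaluate the numerator via the method of images for a Brownian motion confined to a strip. By symmetry of $W$ we may assume $x\geq 0$ so $r=x/a\in[0,1)$, and after rescaling space and time so that the barriers sit at $\pm 1$, the factor $1-e^{-2(1-r)/\Delta}$ arises as the classical one-sided Brownian-bridge no-crossing probability at the upper barrier $+1$ for a bridge from $0$ to $r$ (a standard reflection-principle calculation, e.g.\ p.\ 67 of \citet{harrison}). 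The residual factor $p(0,1,\Delta,1-r)$ then encodes the contribution of the opposite barrier $-1$ through the series-type expression defining $p$ in (\ref{Ap2}).

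The main obstacle is precisely this last factorization. The two-sided bridge survival is naturally an infinite alternating image series, so extracting a clean product of two named factors requires identifying $p$ with the appropriate geometric/series remainder and verifying the algebra term-by-term against (\ref{Ap2}); I would approach this by conditioning first on the event of not crossing $+1$ and then analyzing the residual conditional law to recognize $p$. The Radon--Nikodym claim, by contrast, is essentially Bayes plus the strong Markov property plus a one-line optimization, with no real surprises.
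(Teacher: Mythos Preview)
Your treatment of (\ref{AR_tau}) is exactly right and matches what the paper dismisses as ``straightforward'': Bayes plus the strong Markov property at $\tau_a$, followed by the one-variable optimization you describe.

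For the survival probability, your idea is correct and essentially the same as the paper's, but the paper's execution avoids the obstacle you flag. You propose rescaling to barriers at $\pm 1$, factoring out the one-sided no-crossing probability at $+1$, and then identifying the residual conditional probability with $p$ by matching series term-by-term. The paper instead applies the affine map $\overline{BB}(t)=1-\tfrac{1}{a}W_t$, which sends the bridge from $0\to x$ to a bridge from $1\to 1-r$ and the barriers $\{-a,a\}$ to $\{2,0\}$. In those coordinates the event $\{\tau_a\geq\Delta\}$ becomes $\{0<\overline{BB}<2\text{ on }[0,\Delta]\}$, and the factorization
\[
\Q(0<\overline{BB}<2)=\Q(0<\overline{BB}<2\mid \overline{BB}>0)\cdot\Q(\overline{BB}>0)
\]
is then \emph{literally} the definition of $p(0,1,\Delta,1-r)$ times the one-sided bridge probability $1-e^{-2(1-r)/\Delta}$. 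No series manipulation or term-by-term verification against (\ref{Ap2}) is needed; the whole point of how $p$ is defined is that it is already the conditional probability you want. Your route would reach the same place after the reflection $u\mapsto 1-u$ (which turns your ``stay below $+1$'' into the paper's ``stay above $0$''), but recognizing that $p$ is a conditional probability rather than just a series short-circuits the algebra entirely.
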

The definition of the function $p(s,x,t,y)$ and the proof of the theorem
can be found in the 
Appendix.
This theorem facilitates 
step $2^\prime$ in the above procedure. If $|x|>a$,
it is clear that $\tau_a<\Delta$. Suppose that $|x|<a$. 
Sampling the
indicator $I(\tau_a\geq\Delta)$ is possible by generating two independent
uniform random variables $U$ and $V$. If
\[
U< p\left(0,1,\Delta,1-r\right)\mbox{ and\
}V<\left(1-e^{-\frac{2(1-r)}{\Delta}}\right)<1
\]
hold true, we set $\tau_a\geq\Delta$, 
determining whether
$U<p\left(0,1,\Delta,1-2r\right)$ is possible in finite time by Proposition
4.1 in	\citet{chen2012}.

The second part of Theorem \ref{tau_Bridge} assists in generating 
a sample
of $(\tau_a,W_{\tau_a})$ conditional on the events 
$\tau_a<\Delta$ and
$W_\Delta=x$. We can use the acceptance/rejection method. One can generate
a sample of
$(\tau_a,W_{\tau_a})$ for a standard Brownian motion by following the proposed
method in \cite{Burq2008}. According to equation (\ref{AR_tau}), this sample
might be accepted with the probability proportional to $g\left(\frac{x-\tilde
a}{\sqrt{\Delta-t}}\right)$ as a sample of $(\tau_a,W_{\tau_a})$ given that
$\tau_a<\Delta$ and $W_\Delta=x$.

Step 2 of the above procedure is clear. Furthermore, if $\tau_a\leq \Delta$,
generating $ W_{\kappa_1},W_{\kappa_2},\ldots,W_{\kappa_b}, W_{\tau_a}$
conditional 
on $\tau_a$ is possible by procedure (17) in \cite{chen2012}. Now,
we can assign  $I=1$ if condition (\ref{acceptance_condition}) holds.

 In the rest of this part, we elaborate generating samples of $W$ at a
 sequence of instances $\kappa_1,\kappa_2,\ldots,\kappa_b<\tau_a\wedge \Delta$
 conditioned on $W_{\Delta}=x$ and $\tau_a$ in the case that $\Delta<\tau_a$.

 Let
\[\widetilde W_t =
\begin{cases}
   1-\frac{1}{a} W_{\tau_a-ta^2} & \quad \text{if \ $W_{\tau_a=a}$ }\\
   1+\frac{1}{a} W_{\tau_a-ta^2} & \quad \text{if \ $W_{\tau_a=-a}$ }.
\end{cases}
\]
Thanks to the self-similarity and the time reverse properties of the Brownian
motion, it is easy to verify that $\widetilde W=(\widetilde W_t\colon 0\leq
t\leq \frac{\tau_a}{a^2})$ is a Brownian process given that
$0\leq \widetilde W_{t_1}\leq 2$ and $\widetilde W_{t_{b+1}}= 1-\frac{x}{a}
$. Therefore, we are interested in generating 
samples of $\widetilde W$
at the sequence of instances
$$t_0=\frac{\tau_a}{a^2}\geq t_1\geq \ldots,\geq t_b \geq t_{b+1},$$ where
$t_i=\frac{\tau_a-\kappa_i}{a^2}$ for $i=1,\ldots,b$.

Let $V=(V_t\colon t_{b+1}\leq t\leq t_0)$ be a Brownian meander given
that  $V_{t_{b+1}}=1-\frac{x}{a}$ 
and $V_{t_0}=1$. The exact sampling of
$(V_{t_1},\ldots, V_{t_b})$ is discussed in \citet[Section 6]{Devroy2010}. One
can generate a sample 
of the random vector $(V_{t_1},\ldots, V_{t_b})$ and
accept that as a sample of $(\widetilde W_{t_1},\ldots, \widetilde W_{t_b})$
 with the probability computed in Proposition \ref{Bridge_meander}. The
 acceptance decision can be determined by the method proposed in
 \citet[Section 4.4]{chen2012}.
 \begin{proposition}\label{Bridge_meander}
 For any $t_0\geq t_1\geq \ldots,\geq t_b \geq t_{b+1}$ and $0\leq y_1,\ldots
 y_b\leq 2$, the joint
conditional distribution of $(\widetilde W_{t_1},\ldots, \widetilde W_{t_b})$
has the following likelihood ratio with respect to 
$(V_{t_1},\ldots, V_{t_b})$:
\small{\begin{align*}
\frac{
\Q\left (\widetilde W_{t_1}\in dy_1,\ldots, \widetilde W_{t_b}\in dy_b\Big
\vert  0\leq \widetilde W_{t}\leq 2,  \widetilde W_{t_{b+1}}=
y_{b+1}
,\widetilde W_{t_0}=1\right )
}
{\Q\left (V_{t_1}\in dy_1,\ldots,V_{t_b}\in dy_b\Big \vert  0\leq
V_{t}, V_{t_{b+1}}=y_{b+1},V_{t_0}=1\right)}=
  \tilde c \prod_{i=0}^b
p(t_{i+1},y_{i+1};t_{i},y_i),
\end{align*}}
where 
$y_{b+1}=1-\frac{x}{a}$, $y_0=1$, and $\tilde c>0$ is a constant.
 \end{proposition}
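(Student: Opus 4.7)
The plan is to apply the Markov property of both processes to the ordered sequence $t_{b+1}<t_b<\cdots<t_1<t_0$, decomposing each conditional joint density into a product of bridge factors over the $b+1$ sub-intervals $[t_{i+1},t_i]$. The ratio will then reduce to a product of one-step ratios, each of which is a single number of the form $p(t_{i+1},y_{i+1};t_i,y_i)$ from (\ref{Ap2}).

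First, I would write the conditional joint density of $(\widetilde W_{t_1},\dots,\widetilde W_{t_b})$ given the pinned endpoints $\widetilde W_{t_0}=1$, $\widetilde W_{t_{b+1}}=y_{b+1}$, and the two-sided constraint $0\leq\widetilde W_t\leq 2$. By the Markov property, this decomposes as
\begin{equation*}
\widetilde f(y_1,\dots,y_b)=\frac{1}{C_1}\prod_{i=0}^{b}g_{t_i-t_{i+1}}(y_{i+1},y_i)\,q^{[0,2]}(t_{i+1},y_{i+1};t_i,y_i),
\end{equation*}
where $g_s(\cdot,\cdot)$ is the Gaussian transition density and $q^{[0,2]}(s,x;t,y)$ is the probability that a free Brownian bridge from $(s,x)$ to $(t,y)$ stays in $[0,2]$. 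The constant $C_1$ is a global normalizer determined by the endpoint pinning together with the full two-sided path constraint.

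Second, I would perform the analogous computation for the meander $V$ pinned at $V_{t_{b+1}}=y_{b+1}$ and $V_{t_0}=1$ and conditioned on positivity. The finite-dimensional density of a pinned Brownian meander (obtained, e.g., via Imhof's relation between the meander and $3$-dimensional Bessel bridges, or directly from the known formula) has the same Markovian factorization, with the two-sided bridge-survival probability replaced by the one-sided one:
\begin{equation*}
f(y_1,\dots,y_b)=\frac{1}{C_2}\prod_{i=0}^{b}g_{t_i-t_{i+1}}(y_{i+1},y_i)\,q^{[0,\infty)}(t_{i+1},y_{i+1};t_i,y_i).
\end{equation*}
Taking the ratio $\widetilde f/f$, every Gaussian transition factor cancels identically, the two global normalizers collapse into one constant $\tilde c=C_2/C_1$ (independent of $y_1,\dots,y_b$), and the remaining quotient $q^{[0,2]}/q^{[0,\infty)}$ on each sub-interval is by definition the bridge probability of also staying below $2$ given positivity, which is exactly $p(t_{i+1},y_{i+1};t_i,y_i)$ in (\ref{Ap2}).

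The main obstacle is not the algebraic cancellation but the conditioning on a null event hidden inside the Brownian meander. To make the meander computation rigorous, I would invoke the explicit formula for the finite-dimensional distribution of a pinned meander (either Imhof's theorem or the reflection-principle derivation), so that the one-step factor $q^{[0,\infty)}$ is well-defined and the Markov decomposition above is valid; after that step the proof reduces to the cancellation described. A minor additional check is that the choice $y_0=1$ and $y_{b+1}=1-x/a$ lies in the interior of $[0,2]$ so that $q^{[0,2]}>0$, ensuring the likelihood ratio is bounded and the acceptance/rejection scheme is well-posed.
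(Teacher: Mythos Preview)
Your proposal is correct and follows essentially the same approach indicated by the paper, which defers to Theorem 4.2 in \citet{chen2012}: a Markov factorization of both pinned, path-constrained laws into products of Gaussian transition kernels times bridge-survival probabilities, followed by cancellation of the Gaussian factors so that only the ratio $q^{[0,2]}/q^{[0,\infty)}=p(t_{i+1},y_{i+1};t_i,y_i)$ survives on each sub-interval. Your remark on handling the null-event conditioning via Imhof's relation (or the explicit pinned-meander density) is the right way to make the meander factorization rigorous.
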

 The proof of this proposition is similar to Theorem 4.2 in \citet{chen2012}.

 Given  
the skeleton $(\widetilde W_{\kappa_1},\ldots, \widetilde
 W_{\kappa_b}, W_\Delta)$, sampling $\sup_{0\leq t\leq \Delta} W_t$
 and the location of maximum time is similar to Step 6 of Subroutine
 \ALGAR. It is sufficient to sample $(\mu_i,t_{\mu_i})$ 
jointly, where
 $\mu_i=\sup_{\kappa_i\leq t\leq \kappa_{i+1} } W_t$, 
and $t_{\mu_i}$ is
 the location of the 
maximum over $[ \kappa_i, \kappa_{i+1}]$ conditional 
on
 $\tau_a$ as the maximum of a Brownian meander; see the maxmeander algorithm
 in \citet{Devroy2010}.
\section{Analysis of the Algorithm \ALGExact} \label{analyseALG}
 In this section, we analyze Algorithm \ALGExact\ and show that the algorithm
 terminates in polynomial time. The running
 time of the algorithm is at most $\mathcal{O}(1/\bar \gamma^2)$ for generating 
an exact sample of $(v_t,M(t))$  which is independent of $t$.
Therefore, the running time of the algorithm for generating an exact sample of triplet $(v_t,M(t),Y(t))$ is
at most $\mathcal{O}(1/\bar \gamma^2)+\mathcal{O}(\log(t))$
in which $\mathcal{O}(\log(t))$ is the running time of reading the input and computing $Y(t)$ conditional on $(v_t,M(t))$ .

The key idea here is constructing a constant drift Brownian motion,
dominating $Z(s)$.

\begin{lemma}\label{dominate}
Let $m_{k}$ be the maximum of the process $Z(t)$ until time $\beta_{k}$,
where $\beta_k$ is such that $Z({\beta_{k}})<\zeta_{k}-cd$. Then,
$Z(t)<m_k$ for all $t<\alpha_{k+1}$. Specifically, if $\alpha_{k+1}=\infty$,
then $m_k$ is the global maximum of $Z(t)$ for all $t\geq 0$.
\end{lemma}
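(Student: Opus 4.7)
The plan is to reduce the claim to two ingredients: (a) the pathwise domination $Z(s)\le U_s^{\beta_k}$ for $s\ge \beta_k$, which is essentially an immediate consequence of Assumption \ref{A2} applied to the integral representation of $Z$; and (b) the fact that, by construction, $U^{\beta_k}$ does not reach the level $m_k$ before $\alpha_{k+1}$. The ``specifically'' clause is then just the limiting case $\alpha_{k+1}=\infty$.

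First I would write, for $s\ge \beta_k$,
\begin{align*}
Z(s) \;=\; Z(\beta_k) + \int_{\beta_k}^{s}\gamma(u)\,du + \bigl(B(s)-B(\beta_k)\bigr),
\end{align*}
and invoke Assumption \ref{A2} with the pair $(\beta_k,s)$ to bound $\int_{\beta_k}^{s}\gamma(u)\,du \le d - (s-\beta_k)\bar\gamma$. Substituting into the display above and comparing with definition (\ref{U}) gives $Z(s) \le U_s^{\beta_k}$ pointwise for every $s\ge \beta_k$. This is the key step.

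Next I would use the definition of $\alpha_{k+1}$ from Step 4 of Algorithm \ALGExact, namely $\alpha_{k+1}=\inf\{s\ge \beta_k : U_s^{\beta_k}\ge m_k\}$. By this infimum, $U_s^{\beta_k}<m_k$ for every $s\in[\beta_k,\alpha_{k+1})$. Combined with the domination just established, this yields $Z(s)\le U_s^{\beta_k} < m_k$ on $[\beta_k,\alpha_{k+1})$. I would also note that the inequality $U_{\beta_k}^{\beta_k}=d+Z(\beta_k)<d+m_k-cd=m_k-(c-1)d<m_k$ (using $Z(\beta_k)<\zeta_k-cd\le m_k-cd$ and $c>1$) confirms that $\beta_k<\alpha_{k+1}$, so the interval $[\beta_k,\alpha_{k+1})$ is genuinely nonempty. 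On the complementary interval $[0,\beta_k]$ the bound $Z(t)\le m_k$ holds by the very definition of $m_k$ as the supremum of $Z$ on that interval. Piecing the two intervals together gives $Z(t)\le m_k$ for all $t<\alpha_{k+1}$, with strict inequality past $\beta_k$, which is the stated conclusion.

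For the final sentence of the lemma, if $\alpha_{k+1}=\infty$ then the argument above applies for every $s\ge \beta_k$, so $Z(s)<m_k$ on $[\beta_k,\infty)$ and $Z(s)\le m_k$ on $[0,\beta_k]$; hence $m_k=\sup_{t\ge 0}Z(t)$. I do not anticipate any real obstacle here: the lemma is essentially a bookkeeping step, and the only substantive piece is the domination inequality, which is a one-line consequence of Assumption \ref{A2}. The only point requiring a bit of care is writing the argument so that the \emph{strict} inequality after $\beta_k$ is visible (using $c>1$ and $d>0$), rather than the weak inequality that the domination alone would give.
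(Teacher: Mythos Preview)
Your proposal is correct and follows essentially the same approach as the paper: establish the pathwise domination $Z(s)\le U_s^{\beta_k}$ for $s\ge\beta_k$ from Assumption~\ref{A2}, then use the definition of $\alpha_{k+1}$ to conclude. You are in fact more careful than the paper's own proof in separating the weak inequality on $[0,\beta_k]$ from the strict inequality on $(\beta_k,\alpha_{k+1})$ and in verifying explicitly that $\beta_k<\alpha_{k+1}$.
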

\begin{proof}
By Assumption (\ref{A2}), we have
\begin{align*}
 Z(t)&=Z({\beta_k})+B(t)-B({\beta_k})+\int_{\beta_k}^t \gamma(u)du\\
 &\leq Z({\beta_k})+B(t)-B({\beta_k})+d-(t-\beta_k)\cdot
 \bar\gamma=U_t^{\beta_k}.
 \end{align*}
Observe that $U_{\beta_k}^{\beta_k}=Z({\beta_{k}})+d\leq m_{k}-(c-1)d$.
Recall that
\[
\alpha_{k+1}=\inf \{t\geq \beta_{k} \colon U_t\geq m_{k}\}.
\]
Thus, 
we have $Z(t)<m_k$ for all $t<\alpha_{k+1}$.
\end{proof}
Generating exact samples of $\beta_{k+1}$ and $m_{k+1}$ is possible by
Subroutine \ALGAR.
Note that the process $U^{\beta_k}$ is a Brownian motion with constant
drift. Therefore, we can easily sample the hitting time $\alpha_{k+1}$
based on the following lemma.
\begin{lemma}\label{d_alpha} Let  $x=m_k-U_{\beta_k}^{\beta_k}>0$. Then,
$$\P(\alpha_{k+1}=\infty|U_{\beta_k})=1-\exp(-2\bar\gamma x),$$ and
\begin{align}\label{hit-d}
\P\Big(\alpha_{k+1}\leq t+\beta_k|U_{\beta_k}^{\beta_k},\alpha_{k+1}<\infty
\Big)=\P\Big(IG\left(\frac{x}{|\bar\gamma|},x^2\right)\leq t\Big),
\end{align}
where $IG(\cdot,\cdot)$ denotes the inverse Gaussian-distribution with mean
$\frac{x}{|\bar\gamma|}$ and shape parameter $x^2$.
\end{lemma}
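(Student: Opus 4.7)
The plan is to reduce the hitting problem to the classical first-passage distribution of a constant-drift Brownian motion, then apply two well-known facts about such hitting times: the hitting probability formula and the conditional inverse-Gaussian law given hitting.

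First I would rewrite $U^{\beta_k}$ in a form that makes its law transparent. From the definition (\ref{U}), we have $U^{\beta_k}_{s} - U^{\beta_k}_{\beta_k} = (B(s)-B(\beta_k)) - (s-\beta_k)\bar\gamma$ for $s\geq \beta_k$. Setting $\widetilde B(r) = B(\beta_k+r)-B(\beta_k)$, which by the strong Markov property at $\beta_k$ is a standard Brownian motion independent of $\mathcal F_{\beta_k}$, the process $W(r) \df U^{\beta_k}_{\beta_k+r} - U^{\beta_k}_{\beta_k} = \widetilde B(r) - \bar\gamma r$ is a Brownian motion with drift $-\bar\gamma$ starting at $0$. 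Consequently, with $x = m_k - U^{\beta_k}_{\beta_k}$,
\begin{align*}
\alpha_{k+1} - \beta_k = \inf\{r\geq 0 : W(r) \geq x\},
\end{align*}
which, conditionally on $U^{\beta_k}_{\beta_k}$ (equivalently on $x$), is the first passage time of a drift-$(-\bar\gamma)$ Brownian motion to level $x>0$.

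Next I would invoke the classical first-passage result for Brownian motion with drift. For $W$ a Brownian motion with drift $-\bar\gamma < 0$ starting at $0$, the probability of ever reaching level $x>0$ is $\exp(-2\bar\gamma x)$ (see, e.g., \citet[p.~197]{Shreve}). This immediately gives $\P(\alpha_{k+1}<\infty \mid U^{\beta_k}_{\beta_k}) = \exp(-2\bar\gamma x)$, hence the complementary identity $\P(\alpha_{k+1}=\infty \mid U^{\beta_k}_{\beta_k}) = 1-\exp(-2\bar\gamma x)$.

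For the conditional distribution given hitting, I would use the standard ``sign-flipping'' fact for the drift: conditional on the event $\{T_x < \infty\}$, the first-passage time $T_x$ of BM with drift $-\bar\gamma$ to level $x>0$ has the same law as the first-passage time of BM with drift $+\bar\gamma$ to $x$, which is inverse-Gaussian with mean $x/\bar\gamma$ and shape parameter $x^2$ (again \citet[p.~297]{Shreve}). This can be seen either by the explicit joint density of $(T_x,\, W_{T_x})$ via Girsanov, or by conditioning on $\{T_x<\infty\}$ and observing the resulting $h$-transform flips the drift. Combining with the first step yields (\ref{hit-d}). The main ``obstacle'' is really bookkeeping: ensuring the drift signs and the centering by $U^{\beta_k}_{\beta_k}$ are handled consistently so that what remains is exactly the textbook first-passage problem — once that reduction is made, both assertions are direct citations.
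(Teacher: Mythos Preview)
Your proposal is correct and follows essentially the same approach as the paper: both reduce the problem to the first-passage time of a constant-drift Brownian motion to level $x$ and then cite the standard results from \citet[p.~297]{Shreve}. The only cosmetic difference is that the paper writes down the explicit survival function $\P(\alpha_{k+1}\geq T+\beta_k\mid U_{\beta_k})=\Phi\big((x+\bar\gamma T)/\sqrt{T}\big)-e^{-2\bar\gamma x}\Phi\big((-x+\bar\gamma T)/\sqrt{T}\big)$, takes $T\to\infty$ for the first claim, and then identifies the conditional CDF as inverse Gaussian by inspection, whereas you invoke the hitting probability and the drift sign-flipping under conditioning directly; your explicit use of the strong Markov property at the stopping time $\beta_k$ is a welcome piece of care that the paper leaves implicit.
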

  \begin{remark}
  Draws from the inverse Gaussian distribution can be generated in a very
  efficient way; see the algorithm described
in \citet[Chapter IV]{Devroye-non-uniformrandom}.
\end{remark}
 \begin{proof}By 
Assumption (\ref{A2}), $\bar \gamma>0$, and we have
 from \citet [p. 297]{Shreve} that
\begin{align*}\label{hit-d}
\P\Big(\alpha_{k+1}\geq T+\beta_k|U_{\beta_k}\Big)= \Phi \Big(\frac{x+\bar
\gamma T}{\sqrt{T}}\Big)-\exp(-2\bar \gamma x) \Phi \Big(\frac{-x+\bar
\gamma T} {\sqrt{T}}\Big).
\end{align*}
Therefore, $\P(\alpha_{k+1}=\infty|U_{\beta_k})=1-\exp(-2\bar\gamma x)$, and
\[\P\Big(\alpha_{k+1}\leq T+\beta_k|U_{\beta_k},\alpha_{k+1}<\infty
\Big)=\exp(-2\bar \gamma x)\Phi \Big(\frac{-x+\bar \gamma
T}{\sqrt{T}}\Big)+\Phi \Big(\frac{-x-\bar \gamma T} {\sqrt{T}}\Big),
\]
which is the distribution of the 
inverse Gaussian distribution.
  \end{proof}

 As a result of these two lemmas, we can easily observe that the algorithm
 terminates in a finite number of iterations. The probability that the
 algorithm terminates in iteration $k$ (i.e. $\alpha_{k+1}=\infty$) is at
 least $1-\exp(-2\bar\gamma(c-1)d)$. Moreover, if  $\alpha_{k+1}=\infty$, then
 $\zeta_k$ is the maximum of $Z(t)$ for all $t\geq 0$. So at each step $k$,
 the procedure is terminated with at least constant probability. Therefore,
 the algorithm terminates in finite time almost surely. We
summarize this result in the following theorem.
\begin{theorem}\label{RT_alg1} For every $\bar \gamma>0$, Algorithm
\ALGExact\ terminates in finite time. Furthermore,The expected number of iterations is
at most $(1- \exp{(-2(c-1)d \bar \gamma)})^{-1}$, and the expected running time of generating an exact sample of $(v_t,M(t))$ is
$\mathcal{O}(1/\bar \gamma^2)$ for every $t\in[0,\infty]$.
\end{theorem}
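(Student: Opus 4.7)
The plan is to decompose the total cost as the product of the expected number of outer-loop iterations $K$ and the expected cost of a single iteration, and to bound each factor separately using Lemma \ref{d_alpha} and Theorem \ref{RT-Alg2}.

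First I would show that $K$ is stochastically dominated by a geometric random variable. By the definition of $\beta_k$ in \eqref{beta2}, we have $Z(\beta_k) \leq m_k - cd$, so the gap appearing in Lemma \ref{d_alpha} satisfies
\[
x_k := m_k - U^{\beta_k}_{\beta_k} = m_k - Z(\beta_k) - d \geq (c-1)d.
\]
Applying Lemma \ref{d_alpha} conditionally on the history up to $\beta_k$ yields
\[
\P(\alpha_{k+1}=\infty \mid \mathcal{F}_{\beta_k}) = 1 - \exp(-2\bar\gamma x_k) \geq 1 - \exp\bigl(-2(c-1)d\bar\gamma\bigr) =: p,
\]
uniformly in $k$. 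Since the loop terminates the first time $\alpha_{k+1}=\infty$, $K$ is stochastically dominated by a $\mathrm{Geom}(p)$ random variable, proving both that $K < \infty$ almost surely and that $\mathbb{E}[K] \leq 1/p$, which is exactly the stated iteration bound.

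Next I would bound the expected running time of a single iteration. At the start of iteration $k$, the coupling in Lemma \ref{dominate} gives $Z(\alpha_k) \leq U^{\beta_{k-1}}_{\alpha_k} = m_{k-1} \leq m_k$, so Subroutine \ALGAR\ must drive $Z$ downward only until it reaches the level $m_k - cd$, a drop of at most $cd$. After a time-shift placing the starting value of $Z$ at the origin, Theorem \ref{RT-Alg2} with $y = cd$ gives expected per-iteration cost $\mathcal{O}((cd+d+\theta)/\bar\gamma) = \mathcal{O}(d/\bar\gamma)$, treating $c$ and $\theta$ as fixed algorithmic parameters; the additional $\mathcal{O}(1)$ cost of sampling $\alpha_{k+1}$ from the inverse-Gaussian distribution in Lemma \ref{d_alpha} is negligible. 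Crucially, this bound holds \emph{conditionally} on the history $\mathcal{F}_{\alpha_k}$, uniformly in $k$. Combining via a tower-of-expectations argument,
\[
\mathbb{E}[\text{total time}] \leq \mathbb{E}[K]\cdot \mathcal{O}(d/\bar\gamma) \leq \frac{\mathcal{O}(d/\bar\gamma)}{1 - \exp(-2(c-1)d\bar\gamma)} = \mathcal{O}\!\left(\frac{1}{\bar\gamma^2}\right),
\]
where the last equality uses $1-e^{-z} \geq z/(1+z)$ to absorb the factor of $d$ as $\bar\gamma \to 0$, and the geometric factor is bounded away from zero for $\bar\gamma$ bounded from below. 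For finite $t$, the extra termination rule $\Lambda^{-1}(\alpha_{k+1})<t$ can only cause the algorithm to stop earlier, so the bound carries over uniformly in $t \in [0,\infty]$.

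The main technical obstacle will be the clean invocation of Theorem \ref{RT-Alg2} across iterations. That theorem is phrased for hitting a fixed negative level by $Z$ started at the origin with the inhomogeneous drift $\gamma$, whereas each iteration of Algorithm \ALGExact\ restarts from a random point $Z(\alpha_k)$ with the drift effectively time-shifted. Verifying that the resulting per-iteration expected running time is \emph{uniformly} $\mathcal{O}(d/\bar\gamma)$---so that the tower-of-expectations step is rigorously justified---is the key point, and it hinges on Lemma \ref{dominate}: the domination $Z \leq U^{\beta_{k-1}}$ forces the required drop to be at most $cd$ regardless of past excursions of the process.
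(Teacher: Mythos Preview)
Your proposal is correct and follows essentially the same route as the paper: bound $\mathbb{E}[K]$ by geometric domination via Lemma \ref{d_alpha}, bound the per-iteration cost via Theorem \ref{RT-Alg2}, and multiply. The paper's proof is terser---it simply asserts the per-iteration cost is $\mathcal{O}(1/|\bar\gamma|)$ and that $\mathbb{E}[K]=\mathcal{O}(1/|\bar\gamma|)$ without tracking the $d$ factor or invoking Lemma \ref{dominate} explicitly---whereas you spell out the drop-size bound $y\leq cd$ via $Z(\alpha_k)\leq m_k$, the cancellation of $d$ through $1-e^{-z}\geq z/(1+z)$, and the conditional uniformity needed for the tower argument; these are genuine refinements but not a different strategy.
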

\begin{proof}
It is clear that the running time of generating an exact sample of $(v_t,M(t))$ is bounded for every $t\geq0$
is bounded by the running time of generating an exact sample of $(v_\infty,M(\infty))$. Therefore, we show the result for the recent 
case. Observe that
$$U_{\beta_{k}}^{\beta_k}=Z_{\beta_k}+d\leq m_{k}-(c-1)d.$$ Therefore,
by Lemma \ref{d_alpha}, the probability that the algorithm terminates in
the next step is
\[
\P\Big(\alpha_{k+1}=\infty|U_{\beta_k}\Big)=1-\exp(2
(m_{k}-U_{\beta_{k}})\bar\gamma)
\geq 1-\exp(2(c-1)d\bar\gamma).
\]
Hence, the number of iterations before termination represented by $K$,
is dominated by a geometric random variable. So we have
$$ E [K] \leq  (1- \exp{(-2(c-1)d \bar \gamma)})^{-1}. $$
Given that $\alpha_K=\infty$,  we have
$$Z(t) \leq U_t <m_K$$
for all $t>\beta_K$. Since $m_K$ is the maximum of the process until time
$\beta_k$, we conclude that $\sup_{t\geq 0}Z(t)=m_K. $

 According to Theorem \ref{RT-Alg2}, the expected running time of generating
 a sample of $\displaystyle\zeta_k=\sup_{\alpha_k\leq t<\beta_k} Z(t)$
 by Subroutine \ALGAR\ is $\mathcal{O}(1/|\bar \gamma|)$ for every $1\leq
 k \leq K$. Since,
 $$E[K]=\mathcal{O}(1/|\bar \gamma|),$$
  the expected running time of Algorithm \ALGExact\ is
  $\mathcal{O}\left(1/\bar {\gamma}^2\right)$.
\end{proof}

In the end, it is worth 
mentioning that one can improve the performance
of the algorithm by changing the update rule of $\beta_{k}$:
\begin{align}\label{beta3}
\beta_{k}=\inf \left\{t\geq \epsilon + \alpha_{k} \colon Z(t)\leq m^{
\epsilon}_{k}- cd\right\},
\end{align}
where $ m^{ \epsilon}_{k}=\displaystyle\sup_{t\leq  \epsilon +
\alpha_{k-1}}Z(t)$.
Here, $ \epsilon$ is a parameter to guarantee that each step is not too
short. Our simulation experiments show that choosing reasonable $\epsilon$
improves the running 
time.

\section{Numerical Experiment}
We illustrate the effectiveness and relative performance of the exact sampling
method through the numerical experiment. We apply exact algorithm  \ALGExact\ to
RBM with drift coefficient $$\gamma(u)=\cos(2\pi u)-0.5.$$ In other words,
\begin{align}
dX_t= (\cos(2\pi t)-0.5)dt +dB(t)+ dL_t.\label{cosine}
\end{align}
The drift coefficient $\gamma(u)=\cos(2\pi u)-0.5$ is a periodic function
with period $1$, and $$\bar\gamma=-\int_0^1\cos(2\pi u)-0.5 du=0.5>0.$$
Therefore, Assumption \ref{A2} holds. Recall that
\begin{align}
X(n)\Rightarrow M(\infty)= \sup_{t\geq 0}\Big ( \int_0^t (\cos(2\pi
u)-.5)du+B(t)\Big).\label{TRN}
\end{align}
In this experiment, we compare the discretization method and our exact
algorithm for generating samples of  $M(\infty)$. Conventional discretization
techniques can only approximate samples of $M(\infty)$; the exact algorithm
returns exact samples of $M(\infty)$.
\subsection{Discretization Method}\label{DM}
We compare the exact draws of our algorithms with the approximate ones of the
simple discretization scheme. A naive approach to discretize (\ref{cosine})
is given by
\[X_{t_{i+1}}=\max(X_{t_{i}}+\gamma(t_i)(t_{i+1}-t_i)+B(t_{i+1})-B(t_{i}),0),\]
where $t_i=i\delta$ for $i=1,2,\ldots$ and the step size $\delta>0$. However,
\citet{asmussen1995discretization} shows that this discretization scheme
is highly biased and the bias is at least of order $\delta^{1/2}$. As an
alternative, we employ the discretization scheme for time-dependent RBM
similar to \citet{lepingle1995euler}. In Proposition \ref{opt_step_all},
we show that the bias of this scheme is of order $\delta^{2}$. Here,
we quickly outline the discretization scheme. For time step $\delta>0$,
define the piecewise constant drift Brownian motion
$\hat Z^\delta=\{\hat Z^\delta(t)\}_{t\geq 0}$. Consider the discrete grid
points $t_i=\delta i$ for $i=0,1,2,\ldots$. Let $\hat Z^\delta(t)=0$,
 and for $t_ i\leq t \leq t_{i+1}$
\begin{align}\label{hatX}
\hat Z^\delta (t)\df\hat Z^\delta (t_i)+\gamma_i(t-t_i)+B(t)-B(t_i),
\end{align}
where $\gamma_i=\frac{1}{\delta}\int_{t_i}^{t_{i+1}} \gamma(u)du $.
The joint distribution of $$\Big(\hat Z^\delta (t_{i+1}), \sup_{t_ i\leq
t \leq t_{i+1}}\hat Z^\delta (t)\Big)$$ given $\hat Y^\delta (t_{i})$
is known. Thus, the exact sampling of the process and its maximum over
the grid points is possible. The details of the algorithm are given in
\citet[p. 302]{Glynn-simulation}. By choosing a large $T=t_M$ and sufficiently
small $\delta$, the random variable $\displaystyle\sup_{0\leq t\leq t_M}
\hat Z^\delta (t)$ approximates $M(\infty)$.

In Proposition \ref{opt_step_all}, we discuss how to  allocate the
computational budget of the discretization method between the number of time
step $\delta$ and the number of trials. We show that for the first-order
method of discretization, it is asymptotically optimal to increase the
number of time steps proportional to the fourth root of the number of
replications. However, the optimal constant of proportionality is not known.

\subsection{Results}
We generate samples of $M(\infty)$ defined in (\ref{TRN}) using  exact
Algorithm \ALGExact\ and the discretization scheme. For the  discretization
scheme, we use different increments $\delta$ and fixed time horizon $T=35$.

Table \ref{ks-table} presents the time required to get 
200,000 draws from
$M(\infty)$ for the exact algorithm and  discretization scheme for different
increments $\delta$. Moreover, the $p$-values of the Kolmogorov--Smirnov test
are included, 
which compare the approximate samples of the Euler scheme with
the exact sample for different increments $\delta$. The $p$-values indicate
whether the samples are drawn from the same distribution or not.

Table \ref{unbaised-table} reports the comparison of the 
estimation of
$\EE[M(\infty)]$ by both methods. Motivated by Theorem \ref{opt_step_all},
the step size is set to $\delta=\frac{1}{5}N^{-\frac{1}{4}}$, where $N$ is
the number of simulation trials. The fourth and fifth columns of the table
show the estimation of $\EE[M(\infty]$ and the 
$90\%$ confidence interval for
different numbers 
of trials. The standard error (SE) is estimated as the
sample standard
deviation of the simulation output divided by the square root of the number
of trials.
The bias is given by the difference between the expectation of the estimator
and the true value of $\EE[M(\infty)]$.  Bias of the estimator generated
by the exact method is zero. Thus, the true value is estimated using $2$
million trials generated by the exact method. The bias of the discretization
scheme is estimated by using 800,000 
trails.
The $8^{\mbox{th}}$ column of the table reports the root mean square error
(RMSE) calculated by $\sqrt{\mbox{SE}^2+\mbox{Bias}^2}$. The last column
shows the computational time required to generate different numbers 
of
trials. Simulations were performed on a server with an Intel Core Duo 3.16
GHz processor and 4GB RAM.The code is written in MATLAB Version  (R2011b).

It is remarkable that the exact algorithm is much more efficient than the
discretization method. The bias is zero, the error is less, and even the
algorithm improves the running time.

The convergence rates of the exact and discretization methods are compared
in Figure \ref{CRate}. The exact method achieves an 
optimal convergence rate;
RMSE of the estimator decreases at a rate 
of $\mathcal{O}(1/\sqrt{t})$, where $t$
is the computational budget. The convergence rate of the discretization scheme
is $\mathcal{O}(t^{-2/5})$, 
confirming Theorem \ref{opt_step_all}. Since
the convergence of the 
discretization scheme is slower, we can conclude that
the discretization bias is significant.

\begin{table}[h]
\begin{center}
   \caption{Comparing the samples generated by the discretization scheme and
   exact method by the Kolmogorov--Smirnov test.}
\begin{tabular}{llll  }
\hline
&$\delta$ & p-value & Time (Sec)\\
\hline
\multirow{6}{*}{Discretization}&$2^{-1}$	&			E-152
&	21	\\
&$2^{-2}$	&		E-20	&	29	\\
&$2^{-4}$	&		0.007	&	62	\\
&$2^{-6}$	&			0.002	&	221	\\
&$2^{-10}$	&		0.145	&	2305	\\
&$2^{-12}$	&		0.231	&	11200	\\

\hline
Exact Method & --- & --- &1305\\
\hline
\end{tabular}
\label{ks-table}
\bigskip
\end{center}

Simulation Results of $\EE[M(\infty]$ under Model (\ref{cosine}). The
p-values for the Kolmogorov--Smirnov test with null hypothesis that the
exact and the corresponding approximate draws come from the same distribution.
\end{table}
\begin{figure}[b]
    \caption{\small{Convergence of the RMSEs computed by discretization
    and exact method. }}

\begin{center}
  \includegraphics[ width=100 mm,trim = 0mm 0mm 0mm 5mm,
  clip]{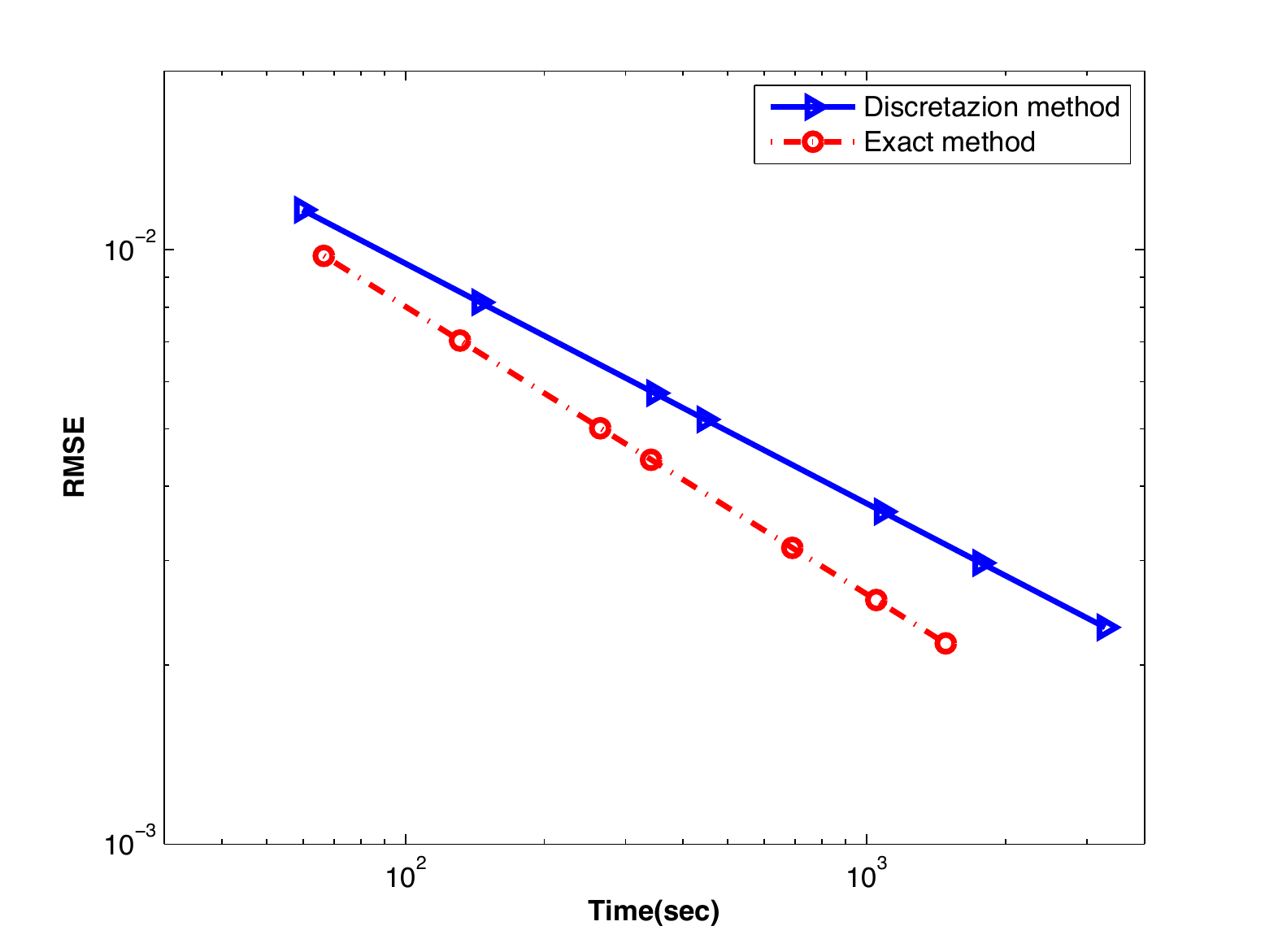}
    \label{CRate}
    \end{center}
    The convergence rate of the exact scheme is $\mathcal{O}(t^{-1/2})$, and the convergence rate of discretization scheme is $\mathcal{O}(t^{-2/5})$.
 \end{figure}

\begin{table}[b]\label{unbaised-table}
  \centering
   \caption{Simulation results of estimation $\EE [M(\infty)]$ under the
   Model (\ref{cosine})}
\scalebox{0.9}{
\begin{tabular}{clcccccccr}
\hline
 & Trials & Steps & Mean			& 90\% CI &
 SE	     & Bias	       &RMSE & Time(sec) \\
 \hline
\multirow{7}{*}{Discretization} & 10000 & 2.00E-03 & 1.0519 & [ 1.0356	,
1.0681	] & 9.89E-03 & 5.68E-03 & 1.14E-02 & 60 \\
 & 20000 & 1.68E-03 & 1.0505 & [  1.0390  ,  1.0620  ] & 7.02E-03 & 4.28E-03
 & 8.22E-03 & 145 \\
 & 40000 & 1.41E-03 & 1.0490 & [  1.0409  ,  1.0571  ] & 4.95E-03 & 2.82E-03
 & 5.70E-03 & 347 \\
 & 50000 & 1.34E-03 & 1.0493 & [  1.0420  ,  1.0565  ] & 4.43E-03 & 3.09E-03
 & 5.40E-03 & 446 \\
 & 100000 & 1.12E-03 & 1.0484 &[ 1.0433  ,  1.0536  ] & 3.12E-03 & 2.24E-03 &
 3.85E-03 & 1081 \\
 & 150000 & 1.02E-03 & 1.0470 & [  1.0428  ,  1.0512  ] & 2.56E-03 &
 8.35E-04 & 2.69E-03 & 1769 \\
 & 200000 & 9.46E-04 & 1.0470 & [  1.0434  ,  1.0507  ] & 2.21E-03 &
 8.58E-04 & 2.37E-03 & 3290 \\

 \hline

\multirow{7}{*}{Exact}
& 10000 &  NA & 1.0477 & [  1.0314 ,  1.0639  ] & 9.91E-03 &0 & 9.91E-03 &
67 \\
 & 20000 & NA & 1.0456 & [  1.0341  ,  1.0571  ] & 7.00E-03 & 0 & 7.00E-03 &
 131 \\
 & 40000 &NA  & 1.0485 & [  1.0404  ,  1.0566  ] & 4.95E-03 & 0 & 4.95E-03 &
 265 \\
 & 50000 & NA & 1.0421 & [  1.0348  ,  1.0494  ] & 4.43E-03 &0 & 4.43E-03 &
 341 \\
 & 100000 & NA & 1.0453 & [  1.0401  ,	1.0504	] & 3.13E-03 & 0 & 3.13E-03 &
 690 \\
 & 150000 & NA & 1.0458 & [  1.0416  ,	1.0500	] & 2.56E-03 & 0 & 2.56E-03 &
 1049 \\
 & 200000 & NA & 1.0468 & [  1.0432  ,	1.0504	] & 2.21E-03 & 0 & 2.21E-03 &
 1484 \\
\hline
\end{tabular}
}
\end{table}
\newpage

\bibliographystyle{apa}
\bibliography{references}

\appendix
\section{Optimal Convergence Rate of Discretization
Scheme}

This appendix presents the optimal tradeoff between choosing the number
of trials $N$ and time step $\delta$ in approximately generating samples
by the discretization scheme discussed in Subsection \ref{DM}. 
Suppose we
want to compute
$\alpha =\EE [M(T)]$, where $$M(T)=\sup_{0\leq t\leq T}
B(t)+\int_0^t\gamma(u)du$$
by Monte Carlo simulation. However, we generate samples of
$$\hat M^\delta=\sup_{0\leq t\leq T} \hat Z^\delta(t)$$
 in which $\hat Z^\delta(t)$ is defined by (\ref{hatX}) using time steps
 of length $\delta$. As an estimator of $\alpha$, we compute
 $$\hat \alpha(\delta,N)=\frac{1}{N}\sum_{j=1}^N \hat M^{\delta}_j, $$
where $\hat M_1^{\delta},\ldots,\hat M_N^{\delta}$ is a sequence of 
i.i.d.
copies of $\hat M^\delta$. The following assumption is required for Theorem
\ref{opt_step_all}.
 \begin{assumption}\label{assumption_allocation}Assume that
 \begin{enumerate}[i.)]
\item $\var [\hat M^\delta] \rightarrow \var [M(T)]$ as $\delta\downarrow 0$.
\item The function $\gamma(u)$ is continuously differentiable.
\end{enumerate}
 \end{assumption}
\begin{theorem}\label{opt_step_all}
Suppose $c$ is the computational budget to approximate $\alpha =\EE M(T)$
by $\hat \alpha(\delta,N)$.
 Assume that \ref{assumption_allocation} holds. Then, it is asymptotically
 optimal to draw
 $N=\mathcal{O}(c^{\frac 4 5})$ trials and choose time step
 $\delta=\mathcal{O}(c^{\frac {-1} {5}})$ to minimize the root mean square
 error (RMSE) which gives RMSE at most $\mathcal{O}(c^{-\frac 2 5})$.
\end{theorem}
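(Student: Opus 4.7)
\begin{proofof}{Theorem \ref{opt_step_all} (plan).}
The plan is to carry out the usual bias–variance decomposition of the mean square error and to optimize it against the computational budget constraint. Writing
\[
\mathrm{MSE}(\hat\alpha(\delta,N))
= \bigl(\EE[\hat M^\delta]-\alpha\bigr)^{2}
+ \frac{\var[\hat M^\delta]}{N},
\]
assumption (i) gives $\var[\hat M^\delta]=\var[M(T)]+o(1)$, so the second
term is $\Theta(1/N)$ as $\delta\downarrow 0$. The real work is bounding
the squared bias, for which I will show that $|\EE[\hat M^\delta]-\alpha|
= \mathcal{O}(\delta^{2})$ under the $C^{1}$ assumption on $\gamma$.

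The key step is a deterministic pathwise comparison of $\hat Z^\delta$
with the true process $Y(t)=B(t)+\int_{0}^{t}\gamma(u)\,du$. Because the
piecewise drift $\gamma_i$ is chosen so that $\gamma_i\,\delta
=\int_{t_i}^{t_{i+1}}\gamma(u)\,du$, the two processes agree at the grid
points $t_i$, and between grid points the Brownian increment cancels,
leaving
\[
Y(t)-\hat Z^\delta(t)
= \int_{t_i}^{t}\bigl(\gamma(u)-\gamma_i\bigr)\,du,
\qquad t\in[t_i,t_{i+1}].
\]
A Taylor expansion of $\gamma$ around $t_i$ (using continuous
differentiability of $\gamma$ on the compact interval $[0,T]$) gives
$\sup_{u\in[t_i,t_{i+1}]}|\gamma(u)-\gamma_i|\le C_{\gamma}\,\delta$, and
hence $\|Y-\hat Z^{\delta}\|_{\infty}\le C_{\gamma}\,\delta^{2}$. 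Since
the supremum is a $1$-Lipschitz functional in the sup norm, this yields
$|M(T)-\hat M^{\delta}|\le C_{\gamma}\,\delta^{2}$ pathwise, and taking
expectations gives $|\EE[\hat M^{\delta}]-\alpha|=\mathcal{O}(\delta^{2})$.

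Next I would account for the computational cost. Generating one copy of
$\hat M^{\delta}$ requires simulating $\hat Z^{\delta}$ and its maximum on
each of the $T/\delta$ subintervals, each step being $\mathcal{O}(1)$
work, so the total budget satisfies $c=\Theta(N/\delta)$. Substituting
into the MSE bound yields
\[
\mathrm{MSE}\le C_{1}\,\delta^{4}+\frac{C_{2}\,\delta}{c},
\]
and elementary minimization over $\delta$ gives the optimum at
$\delta^{5}\propto 1/c$, i.e.\ $\delta=\mathcal{O}(c^{-1/5})$ and
$N=c\delta/T=\mathcal{O}(c^{4/5})$, with balanced bias and variance each
contributing $\mathcal{O}(c^{-4/5})$ to the MSE, so
$\mathrm{RMSE}=\mathcal{O}(c^{-2/5})$.

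The main obstacle I anticipate is the bias bound: one has to be sure that
the pathwise estimate $\|Y-\hat Z^{\delta}\|_{\infty}=\mathcal{O}(\delta^{2})$
really transfers to the supremum functional uniformly in the Brownian
sample path, which it does because the Brownian increment is common to
both $Y$ and $\hat Z^{\delta}$ and therefore cancels in the difference.
Assumption (i) is used only to keep the variance of $\hat M^{\delta}$
bounded away from $0$ and $\infty$; without it one would still obtain the
same asymptotic rate but with a possibly degenerate constant.
\end{proofof}
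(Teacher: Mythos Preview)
Your proposal is correct and follows essentially the same route as the paper's own proof: the same bias--variance decomposition, the same pathwise bound $\|Y-\hat Z^{\delta}\|_{\infty}=\mathcal{O}(\delta^{2})$ obtained by cancelling the common Brownian increment and using the $C^{1}$ smoothness of $\gamma$, the same cost model $c\propto N/\delta$, and the same optimization. The paper's argument differs only cosmetically (it records the pathwise bound as $\EE[(M(T)-\hat M^{\delta})^{2}]\le m^{2}\delta^{4}$ rather than stating the bias directly), so there is nothing substantive to add.
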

 \begin{proof}By using Assumption \ref{assumption_allocation}, we have
 \begin{align}
N \var [\alpha(\delta,N)]&= \frac{1}{N}\sum_{j=1}^N \var [\hat
M^{\delta}_j]\nonumber\\
&= \var [\hat M^\delta] \rightarrow \var[ M(T)]\label{var_lim}
 \end{align}
 as $\delta \downarrow 0$.
 Note that
 \begin{align*}
\EE[ (\hat \alpha(\delta,N)-\alpha)^2]&=  \var [\alpha(\delta,N)]+(\alpha-
\EE [\hat M^\delta])^2\\
  &=  \frac{1}{N}\var [\hat M^\delta]+(\EE [M(T)- \hat M^\delta])^2.
 \end{align*}
The 
first term is bound by $\mathcal{O}(\frac{1}{N})$ asymptotically. The
second term can be bounded over each sample path. Observe that
\begin{align*}
\vert M(T)-\hat M^\delta \vert&= \left\vert \sup_{0\leq t\leq T} Z(t)
-\sup_{0\leq t\leq T}\hat Z^\delta(t)\right\vert\\
&\leq \sup_{0\leq t\leq T} \vert Z(t) -\hat Z^\delta(t)\vert.
\end{align*}
 For every $t_i\leq t<t_{i+1}$ and $i=1,\ldots,{T/\delta}$,  we obtain
 \begin{align*}
\vert Z(t) -\hat Z^\delta(t)\vert &= \left \vert B(t)+\int_{t_i}^t\gamma(u)du
-\left(B(t)+\bar\gamma_i (t-t_i)\right)\right \vert\\
&\leq \left\vert \int_{t_i}^t\gamma(u)du -\bar\gamma_i (t-t_i)\right\vert,\\
&\leq m\delta^2,
\end{align*}
 where $m=\sup_{0\leq t\leq T} \gamma'(t)$ is constant. The last inequality
 is followed by Taylor's theorem and the assumption that $\gamma(u)$ is
 continuously differentiable. Therefore,
 \begin{align}\label{Bias}
\EE[( M(T)-\hat M^\delta)^2 ]\leq m^2\delta^4.
 \end{align}
 By combining  (\ref{var_lim}) and (\ref{Bias}), we have
 \begin{align}
 \EE[ (\hat \alpha(\delta,N)-\alpha)^2]\leq
 m^2\delta^4+\frac{2}{N}\var[M(T)]\label{UBRMSE}
 \end{align}
 for small enough $\delta$. Let $k$ be the constant time required to generate
 an exact sample of
 $$O_i=\sup_{t_i\leq t\leq  t_{i+1}} \hat Z ^\delta(t).$$
  The total computational budget required to draw $N$ independent samples
  of $\hat M^\delta=\max_{1\leq i\leq T/{\delta}} O_i$ is
\[
    c=k\cdot T/ \delta\cdot (N+1).
\]
Therefore, the right hand side of (\ref{UBRMSE}) can be 
minimized by selecting
$N=\mathcal{O}(c^{\frac 4 5})$, and $\delta=\mathcal{O}(c^{-\frac {1}
{5}})$. Moreover, we can conclude that the 
RMSE 
is $\mathcal{O}(c^{-\frac 2 5})$.
\end{proof}
\section{Proof of Theorem \ref{LR_BR}}
\begin{proof}
We consider two different scenarios. First, assume that $\omega \in \{\tau\leq
\Delta\}$. Then, we have
{
 \begin{align*}
 \frac{\P\Big(\ (Z(t))_{0\leq t\leq \Delta\wedge \tau_a}\in d\omega\Big\vert
 Z(\Delta)=x\ \Big)}{\Q\Big((W_t)_{0\leq t\leq \Delta\wedge \tau_a}\in d
 \omega\Big\vert W_\Delta=x\Big)}
 ={}&  \frac{\P\Big(\ (Z(t))_{0\leq t\leq  \tau_a}\in d\omega,Z(\Delta)\in dx\
 \Big)/\P(Z(\Delta)\in dx)}{\Q\Big((W_t)_{0\leq t\leq  \tau_a}\in d \omega,
 W_\Delta\in dx\Big)/ \Q(W_\Delta\in dx)}\\
 ={}& \frac{ \P\Big(\ (Z(t))_{0\leq t\leq  \tau_a}\in d\omega\Big)\cdot
 \P(Z(\Delta)\in dx \vert Z(\tau_a)=\tilde a)} {\Q\Big((W_t)_{0\leq t\leq
 \tau_a}\in d \omega\Big)\cdot	\Q(W_\Delta\in dx\vert W_{\tau_a}=\tilde a)}\\
 &\times	\frac{\Q(W_\Delta\in dx)}{\P(Z(\Delta)\in dx  )}. 
 \end{align*}}
 The 
last equality is obtained by applying the 
strong Markov property of
 Brownian motion.
  Observe that
 \begin{align*}
  \frac{\P(Z(\Delta)\in dx \vert Z(\tau_a)=\tilde a)} {\Q(W_\Delta\in
  dx\vert W_{\tau_a}=\tilde a)}
 & =\exp\left(
		     \frac{1}{2(\Delta-\tau_a)}\left(x-\tilde
		     a-\int_{\tau_a}^\Delta\gamma(u) du\right)^2-(x-a)^2
		       \right)\\
		       &=\exp(\psi(\omega)).
   \end{align*}
 From 
(\ref{bound_ratio}), we have
 \[
 \frac{\P\Big(\ (Z(t))_{0\leq t\leq  \tau_a}\in
 d\omega\Big)}{\Q\Big((W_t)_{0\leq t\leq  \tau_a}\in d
 \omega\Big)}=\exp(ma\Delta+\tilde m a) \cdot\P(I=1\vert \omega).
 \]
 Therefore, letting
 $$c\df\exp(ma\Delta+\tilde m a)\cdot \frac{\Q(W_\Delta\in
 dx)}{\P(Z(\Delta)\in dx  )},$$
  we can conclude the theorem. Similarly, In the case that $\omega \in
  \{\tau\geq  \Delta\}$, we 
have
 \begin{align*}
 \frac{\P\Big(\ (Z(t))_{0\leq t\leq \Delta\wedge \tau_a}\in d\omega\Big\vert
 Z(\Delta)=x\ \Big)}{\Q\Big((W_t)_{0\leq t\leq \Delta\wedge \tau_a}\in d
 \omega\Big\vert W_\Delta=x\Big)}
 &=  \frac{\P\Big(\ (Z(t))_{0\leq t\leq \Delta}\in d\omega
 \Big)/\P(Z(\Delta)\in dx)}{\Q\Big((W_t)_{0\leq t\leq  \Delta}\in d
 \omega\Big)/ \Q(W_\Delta\in dx)}\\
 &=c \cdot \P(I=1\vert \omega).
 \end{align*}
\end{proof}
\section {Proof of Theorem \ref{tau_Bridge}}
\begin{definition}\label{Ap2}
Denote $BB^{x\rightarrow y}_u$ to be the Brownian bridge from $x$ to $y$
on $[s, t]$. Let
\[
     p(s,x;t,y)\df \P\left(0<BB^{x\rightarrow y}_u<2 \mbox{,  for all $u\in
     [s,t]$}\Big \vert	0<BB^{x\rightarrow y}_u  \mbox{,  for all $u\in
     [s,t]$} \right).
\]
In 
\citet{chen2012}, it is shown that
for any $s<t$ and $x,y\in[0,2]$, we have
\begin{align}
 p(s,x;t,y)=\frac{1-\sum_{j=1}^\infty
 (\theta_j-\vartheta_j)}{1-\exp(-2xy/(t-s))},
\end{align}
where
\begin{align*}
&
\theta_j(s,x;t,y)\df\exp\left(-\frac{2(2j-x)(2j-y)}{t-s}\right)+\exp\left(-\frac{2(2(j-1)+x)(2(j-1)+y)}{t-s}\right)\\
&\vartheta_j(s,x;t,y)\df
\exp\left(-\frac{2j(4j+2(x-y))}{t-s}\right)+\exp\left(-\frac{2j(4j-2(x-y)}{t-s}\right).
\end{align*}
\end{definition}
\begin{proofof}{\textit{Theorem \ref{tau_Bridge}}}
 Let $\overline {BB}(t)=1-\frac{1}{a} W_t$ be a Brownian bridge conditional
 on $\overline {BB}(0)=1$ and  $\overline {BB}(\Delta)=1-r$. Then, we have
\begin{align*}
\Q(\tau_a\geq \Delta)&=\Q (-a\leq W_t\leq a\vert W_\Delta=x,W_0=0)\\
&=\Q (0\leq \overline {BB}(t)\leq 2  \mbox{, for all $0\leq t\leq \Delta$})\\
&=p(0,1,\Delta,1-r)\Q\left(  \overline {BB}(t) \geq 0 \mbox{, for all $0\leq
t\leq \Delta$}\right).
\end{align*}
The 
last equality is concluded from p. 23 of \citet{chen2012}. By using the
distribution of the maximum of the 
Brownian bridge, we obtain
\begin{align*}
\Q\left(  \overline {BB}(t) \geq 0 \mbox{, for all $0\leq t\leq
\Delta$}\right)=1-\exp\left(-\frac{2}{\Delta}(1-r)\right).
\end{align*}
The 
second part of the theorem is straightforward.
\end{proofof}

 \end{document}